\tikzset{my loop/.style =  {to path={
  \pgfextra{}
  [looseness=12,min distance=6mm]
  \tikz@to@curve@path},font=\sffamily\small
  }}  
\newtheorem{theorem}{Theorem}[section]
\newtheorem{lemma}[theorem]{Lemma}
\newtheorem{cor}[theorem]{Corollary}
\theoremstyle{definition}
\theoremstyle{remark}
\newtheorem{rem}{Remark}
\newtheorem*{ack}{Acknowledgements}
\newcommand*{\R}{\mathbb{R}}
\newcommand*{\N}{\mathbb{N}}
\newcommand*{\C}{\mathbb{C}}
\newcommand*{\A}{\mathcal{A}}
\newcommand*{\W}{\mathcal{W}}
\newcommand*{\aut}{\mathrm{Aut}}
\newcommand*{\eps}{\varepsilon}
\renewcommand*{\l}{\lambda}
\title{Regularity lemmas in a Banach space setting}
\author{Guus Regts\footnote{University of Amsterdam. Email: \texttt{guusregts@gmail.com}.}}
\begin{document}
\maketitle

\begin{abstract}
\noindent Szemer\'edi's regularity lemma is a fundamental tool in extremal graph theory, theoretical computer science and combinatorial number theory.
Lov\'asz and Szegedy \cite{LS7} gave a Hilbert space interpretation of the lemma and an interpretation in terms of compactness of the space of graph limits.
In this paper we prove several compactness results in a Banach space setting, generalising results of Lov\'asz and Szegedy \cite{LS7} as well as a result of Borgs, Chayes, Cohn and Zhao \cite{BCCZ14}.
\end{abstract}

\section{Introduction}
\subsection{The regularity lemma}
Szemer\'edi's regularity lemma \cite{S76} is a fundamental tool in extremal graph theory, theoretical computer science and combinatorial number theory. See \cite{KS96} for a survey. The lemma has many interpretations, variations and extensions. See for example \cite{K97,FK99,AFKS00,RS04,DKKV05,G05,C06,T06,LS7,S11,BCCZ14}. 

Very roughly the lemma says something of the form: for each $\eps>0$ there exists $k\in \N$ such that the vertex set of any graph can be partitioned into at most $k$ parts, such that for `almost' all pairs of parts the edges between that pair of parts  behaves `almost' like a random bipartite graph, where `almost' depends on $\eps$.
The weak regularity lemma of Frieze and Kannan \cite{FK99} weakens the requirements of the partition in the regularity lemma and measures the error of approximation with respect to the cut norm. 
This has as a consequence that the constant $k$ can be taken to be much smaller.
From the perspective of the adjacency matrix of a graph this means that one approximates this matrix with a bounded sum of cut matrices (in particular this gives a low rank approximation) such that their difference is small with respect to the cut norm.
This is exactly the point of view we take in this paper: we want to find various types of low rank approximations to matrices and tensors, when measured in a particular norm.

Our work is inspired by the work of Lov\'asz and Szegedy \cite{LS7} and Borgs, Chayes, Cohn and Zhao \cite{BCCZ14} relating the compactness of the space of graph limits to Szemer\'edi's regularity lemma. 
We refer to the book by Lov\'asz \cite{L12} for more details on graph limits.
In \cite{LS6} Lov\'asz and Szegedy used the weak version of the regularity lemma \cite{FK99} to assign a limit object to a convergent sequence of dense graphs. This limit object is no longer a graph, but a symmetric measurable function $W:[0,1]^2\to [0,1]$, called a \emph{graphon}.
In \cite{LS7} Lov\'asz and Szegedy showed that the space of graphons, equipped with the cut metric is compact, interpreting this result as an analytical form of the regularity lemma.
Their compactness result implies various kinds of regularity lemmas varying from weak to very strong.
It has recently been extended by Borgs, Chayes, Cohn and Zhao \cite{BCCZ14} to the space of $\R$-valued functions $W$ with bounded $p$-norm, the \emph{$L^p$-graphon space} (for any fixed $p>1$).
\subsection{Compactness}
We will now describe the compactness of the graphon space,  which is denoted by $\W$, more precisely (for details concerning definitions we refer to the next section), after which we mention some of the results in the present paper.

Let $W:[0,1]^2\to \R$ be an integrable function. Consider for $p,q\in [1,\infty]$, $W$ as a kernel operator $W:L^p([0,1])\to L^q([0,1])$; that is, for $f\in L^p([0,1]),$ $(Wf)(x):=\int_{[0,1]} W(x,y)f(y)d\lambda$. The $p\mapsto q$-operator norm is defined by 
\[
\|W\|_{p\mapsto q}=\sup_{\|f\|_p=1} \|Wf\|_q,
\]
where $\|\cdot\|_s$ denotes the $s$-norm on the space $L^s([0,1])$. 
The norm $\|\cdot\|_{\infty \mapsto 1}$ is equivalent to the cut norm.
Define an equivalence relation $\sim$ on $\mathcal{W}$ as follows: $W\sim W'$ if for each $\eps>0$ there exists a measure preserving bijection $\tau:[0,1]\to [0,1]$ such that
$\|W-\tau W'\|_{\infty \mapsto 1}\leq \eps$ for $W,W'\in \mathcal{W}$.
Then the result of Lov\'asz and Szegedy \cite{LS7} can be stated as follows: 
\begin{equation}
\text{the space } (\mathcal{W},\|\cdot\|_{\infty\mapsto 1})/\sim \text{ is compact}.	\label{eq:compact LS}
\end{equation}
The result of Borgs, Chayes, Cohn and Zhao \cite{BCCZ14} then says that we can replace $\cal{W}$ with the symmetric functions in $L^p([0,1]^2)$ of norm at most $1$ for any fixed $p>1$.

In this paper we will show that in \eqref{eq:compact LS} we can also replace the norm $\|\cdot\|_{\infty\mapsto 1}$ by the norm $\|\cdot \|_{q\mapsto\frac{q}{q-1}}$, if we replace $\W$ by the unit ball of $L^p([0,1]^2)$, provided that  $p>\frac{q}{q-1}$, cf. Theorem \ref{thm:compact orbit spaces L}.
In fact, we generalise \eqref{eq:compact LS}, replacing the space $\cal{W}$ by a special weakly compact subset of a Banach space $X$, the relation $\sim$ by an equivalence relation obtained from a subgroup of the group of autmorphisms of $X$ and the norm $\|\cdot \|_{\infty\mapsto 1}$ by an operator-type norm, cf. Theorem \ref{thm:compact banach}.
From this result it is then easy to derive the results of Borgs, Chayes, Cohn and Zhao.
In Section \ref{sec:L} we will also utilise it to include $q\mapsto \frac{q}{q-1}$-norms and apply it to higher order tensors. In Section \ref{sec:ell} we will apply it to $\ell^p$-spaces.

Our method is based on work of the author and Schrijver \cite{RS12}. 
In \cite{RS12} the compactness result of Lov\'asz and Szegedy was extended to a general Hilbert space setting, putting emphasis on the possibility of using different norms than the cut norm and the use of groups and moreover using a different method of proof.
Consequently, our proof of Theorem \ref{thm:compact banach} does not use the martingale convergence theorem.
Thus it yields a different proof of the compactness result of Borgs, Chayes, Cohn and Zhao \cite{BCCZ14}. 
However, there are some similarities.
To prove Theorem \ref{thm:compact banach} we need a result from \cite{RS12}, cf. Lemma \ref{lem:weak Szem}, which may be viewed as a weak regularity lemma in a Hilbert space setting, and which generalises weak regularity results from \cite{FK99,DKKV05}.

\subsection{Algorithms and applications}
Some of the existing versions of the weak and strong regularity lemmas come with efficient algorithms for finding a low rank approximation (or regularity partition).
These algorithms have been applied to find approximation schemes for various sorts of dense instances of counting and optimisation problems \cite{FK99,DKKV05} and to property and parameter testing \cite{AFKS00,BCLSSV06,BCLSV08}; see also the book by Lov\'asz \cite{L12}.

Some of our results can also be put to algorithmic use. In particular, in the $\ell^p$ setting, Lemma \ref{lem:approx} can be used to give approximation algorithms for certain instances of computing the matrix $p\mapsto q$ norm and for finding approximate Nash-equilibria in two player games, in a similar spirit as has been done by Barman in \cite{B14}.
In the $L^p$ setting sampling algorithms from Borgs, Chayes, Lov\'asz, S\'os and Vesztergombi \cite{BCLSV08} can be applied to find low rank approximations to matrices yielding polynomial time approximation algorithms for computing the matrix $p\mapsto q$ norm for dense matrices.
This is work in progress and we will report on it in a forthcoming paper \cite{R15}.

\subsection{Organisation}
In the next section we will discuss some preliminaries and set up some notation. 
In Section \ref{sec:compact banach} we will state and prove Theorem \ref{thm:compact banach}, the aforementioned generalisation of \eqref{eq:compact LS}. We will also deduce some consequences from it.
In Section \ref{sec:L} we will apply this theorem to $L^p$ spaces and in Section \ref{sec:ell} to $\ell^p$ spaces.

\section{Preliminaries and notation}\label{sec:prel}
In this section we will give some preliminaries on Lebesgue spaces and set up some notation.
We refer to \cite{C90} for functional analytic background and to \cite{H50} for measure theoretic background.
\\

\noindent{\bf Lebesgue spaces}
For a measure space $(\Omega,\mathcal{A},\mu)$ and $p\in [1,\infty]$ we denote by $L^p(\Omega)$ the linear space of equivalence classes of $\mu$-integrable complex (or real) valued functions $f:\Omega\to \C \text{ (or } \R)$ with bounded $p$-norm, which is defined as
\begin{align*}
&\|f\|_p:=\left(\int|f|^pd\mu\right)^{1/p} \text{ for }p<\infty\text{, and}
\\
&\|f\|_{\infty}:=\inf_{t\geq0}\{|f(x)|\leq t\mid \text{ for $\mu$-almost all $x$}\}.
\end{align*}
(Recall that two functions are equivalent if they are equal $\mu$-almost everywhere.)

For our results it often does not matter whether we use real or complex-valued functions.
So we generally do not distinguish between the complex and real-valued cases. 
If however we want to specify that we work over the field of real numbers, we denote this by $L^p_\R(\Omega)$.
We often omit the reference to the sigma-algebra $\mathcal{\A}$ and the measure $\mu$. 
In case $\Omega=[0,1]^l$ for some $l\in \N$ we will always equip it with the Borel (or Lebesgue) sigma algebra and with the Lebsegue measure $\lambda$.

For a set $\Omega$, and $p\in[1,\infty]$, $\ell^p(\Omega)$ is just equal to $L^p(\Omega)$ with $\mathcal{A}$ the power set of $\Omega$ and $\mu$ the counting measure.
We often write $L^p$ and $\ell^p$ whenever the underlying space is clear.

For a normed space $(Y,\|\cdot\|)$ we denote its closed unit ball by $B(Y,\|\cdot\|)$, which is defined as $\{y\in Y\mid \|y\|\leq 1\}$. Often we just write $B(Y)$.
Let $(\Omega,\mu)$ be a probability space, i.e., $\mu(\Omega)=1$. 
An important property of the space $L^p(\Omega)$, that we will often use, is the nesting of the closed unit balls: for any $1<p<q<\infty$ we have
\begin{equation}
B(L^\infty)\subset B(L^q)\subset B(L^p)\subset B(L^1).	\label{eq:nesting Lp}
\end{equation}
For the closed unit balls in $l^p(\Omega)$, for any set $\Omega$, the opposite inclusions hold:
\begin{equation}
B(\ell^1)\subset B(\ell^p)\subset B(\ell^q)\subset B(\ell^\infty).	\label{eq:nesting lp}
\end{equation}
\noindent{\bf Weak topologies}
We often need weak topologies in this paper. So it wil be convenient to introduce some notation.
Let $(X,\|\cdot\|)$ be a normed space. Let $X^*$ be the dual space of $X$, i.e., the space of all continuous linear functions $f:X\to \C$ (or $\R$).
Let $Y\subset X^*$ be a subset of $X^*$.
The \emph{weak topology induced by $Y$} on $X$ is generated by sets of the form $B_{x_0,f,\eps}=\{x\in X\mid |f(x)-f(x_0)|<\eps\}$ for $x_0\in X$, $f\in Y$ and $\eps>0$.
If we do not specify $Y$ we mean the weak topology induced by $X^*$.

For $p\in [1,\infty]$ let 
\[p^*:=\left \{\begin{array}{cl}\frac{p}{p-1} & \text{if } 1<p<\infty,
								\\ 1 &\text{if } p=\infty,
								\\ \infty &\text{if }p=1.
								\end{array}\right.
								\]
It is well known that $L^{p^*}$ isometrically embeds into $(L^p)^*$ and this embedding is onto if $p<\infty$ (which holds for $p=1$ provided that $\mu$ is sigma finite).
So if $\Omega$ is a probability space, we can consider the weak topology on $B(L^p)$ induced by $L^{q}$ for each $q\geq p^*$ (by \eqref{eq:nesting Lp}). We denote this by $(B(L^p),w_{q})$.
For $\ell^p$ we need to take $q\leq p^*$. This is then denoted by $(B(\ell^p),w_{q})$.
\\

\noindent {\bf Group actions}
If a group $G$ acts on a set $S$ this induces an action on the functions from $S$ to $\C$ (or $\R$)  via $gf(s):=f(g^{-1}s)$ for $g\in G$, a function $f$ and $s\in S$. Moreover, $G$ has a natural action on $S^l$ for any $l\in \N$.
In particular, if $(\Omega,\mathcal{A},\mu)$ is a measure space and a group $G$ acts on $\Omega$ such that $\mu(gA)=\mu(A)$ for all measurable sets $A\in \mathcal{A}$ and $g\in G$ (in which case we call elements of $G$  \emph{measure preserving bijections}), then $G$ acts on $X=L^p(\Omega^l)$ for any $p$ and $l$ and preserves the $p$-norm.
If a group acts on a set $S$ and $T\subset S$ we denote by $GT$ the set $\{gt\mid g\in G, t\in T\}$.
We call $T\subseteq S$ \emph{$G$-stable} if $GT=T$.

\section{Compact orbit spaces in Banach spaces}\label{sec:compact banach}
In this section we will state and prove our main results concerning compact orbit spaces in Banach spaces and discuss some consequences, which may be viewed as regularity lemmas in a Banach space setting.
\subsection{The main theorems}
Before we can state our results, we need some definitions.
Let $X=(X,\|\cdot \|)$ be a normed space and let $R$ be a bounded subset of $X^*$.
We define a seminorm $\|\cdot\|_R$ and pseudo metric $d_R$ on $X$ by
\begin{equation}
\|x\|_R:=\sup_{r\in R}|r(x)|\quad \quad d_R(x,y):=\|x-y\|_R	\label{eq:def Rnorm}
\end{equation}
for $x,y\in X$.

For a pseudo metric space $(X,d)$ let $\text{Aut}(X)$ denote the group of invertible maps $g:X\to X$ that preserve $d$. 
Let $G$ be a subgroup of $\aut(X)$. Define a pseudo metric $d/G$ on $X$ by
\[
(d/G)(x,y):=\inf_{g\in G}d(x,gy)
\]
for $x,y\in X$.
Note that since $(d/G)(x,y)$ is just equal to the distance between the $G$-orbits of $x$ and $y$, this implies that $d/G$ is indeed a pseudo metric.
For our purposes it is sometimes more convenient to work with $(X,d/G)$ than with $X/G$, but note that $(X,d/G)$ is compact if and only if $X/G$ is compact.
Recall that a (pseudo) metric space is called \emph{totally bounded} if for each $\eps>0$ it can be covered with finitely many balls of radius $\eps$.

We can now state our first result about compactness of orbit spaces in Banach spaces, which we will prove in Section \ref{sec:proofs}.
\begin{theorem}\label{thm:compact banach v1}
Let $(X,\|\cdot\|)$ be a Banach space and let $G$ be a subgroup of $\aut(X)$.
Let $R\subseteq B(X^*)$ be $G$-stable, and let $W\subset X$ be $G$-stable and weakly compact.
If $(W,d_R/G)$ is totally bounded, then $(W,d_R/G)$ is compact.
\end{theorem}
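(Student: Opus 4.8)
The plan is to show that $(W,d_R/G)$ is complete as a pseudo metric space; since it is assumed totally bounded and a totally bounded pseudo metric space is compact precisely when it is complete, this suffices. So I would fix an arbitrary $d_R/G$-Cauchy sequence $(w_n)$ in $W$ and aim to produce a limit for it.

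The first step is to replace the Cauchy sequence by a sequence of $G$-translates that is genuinely $d_R$-Cauchy. Passing to a subsequence $(w_{N_j})$ with $(d_R/G)(w_{N_{j+1}},w_{N_j})\le 2^{-j}$, the definition of $d_R/G$ yields $h_j\in G$ with $d_R(w_{N_{j+1}},h_j w_{N_j})\le 2^{-j}$. Setting $g_1:=\mathrm{id}$, $g_{j+1}:=g_j h_j^{-1}$ and $v_j:=g_j w_{N_j}$, the fact that the elements of $G\subseteq\aut(X)$ are $d_R$-isometries gives $d_R(v_{j+1},v_j)=d_R(w_{N_{j+1}},h_j w_{N_j})\le 2^{-j}$, so $(v_j)$ is $d_R$-Cauchy, and each $v_j$ lies in $W$ by $G$-stability of $W$.

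Next I would invoke weak compactness: by the Eberlein--\v{S}mulian theorem $W$ is weakly sequentially compact, and being weakly compact it is weakly closed, so some subsequence $(v_{j_l})$ converges weakly to a point $v\in W$. The key point is then to upgrade this to convergence in $d_R$. For every $r\in R\subseteq X^*$ the scalars $r(v_j)$ form a Cauchy sequence, since $|r(v_j)-r(v_m)|\le d_R(v_j,v_m)$, and $r(v)$ is a subsequential limit of them, hence $r(v_j)\to r(v)$; moreover, because the $d_R$-Cauchy estimate is uniform over $r\in R$, letting $m\to\infty$ in $|r(v_j)-r(v_m)|\le\eps$ (valid for all $j,m$ beyond some index independent of $r$) shows $\sup_{r\in R}|r(v_j)-r(v)|\le\eps$ for all large $j$. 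Thus $d_R(v_j,v)\to 0$. Since $v_j=g_j w_{N_j}$ with $g_j\in G$, this gives $(d_R/G)(w_{N_j},v)\le d_R(v_j,v)\to 0$, and a Cauchy sequence with a convergent subsequence converges, so $w_n\to v$ in $d_R/G$. This establishes completeness and hence the theorem.

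I expect the main obstacle to be exactly the passage from weak convergence of $(v_j)$ to convergence in the seminorm $\|\cdot\|_R$: weak convergence (i.e.\ convergence of $r(v_j)$ for each individual $r$) is far too weak for this, and what rescues the argument is that the construction of $(v_j)$ builds in a $d_R$-Cauchy estimate that is uniform in $r\in R$, which promotes the pointwise-in-$r$ limit to a uniform-in-$r$ limit. A secondary point requiring care is that $X$ is an arbitrary Banach space, so extracting the weak limit of $(v_j)$ genuinely needs the weak compactness hypothesis on $W$ via Eberlein--\v{S}mulian, not reflexivity of $X$.
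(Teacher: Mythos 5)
Your proof is correct and follows essentially the same route as the paper: reduce to completeness, replace the Cauchy sequence by $G$-translates that are $d_R$-Cauchy, extract a weak limit point from weak compactness, and upgrade the pointwise-in-$r$ convergence to uniform convergence over $R$ via the uniform Cauchy estimate. The only cosmetic difference is that you invoke Eberlein--\v{S}mulian to get a weakly convergent subsequence, whereas the paper simply takes a point in the intersection of the weak closures of the tails (a weak cluster point), which suffices for the same final estimate.
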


Showing that $(W,d_R/G)$ is totally bounded may not be very simple.
However, if $W$ is somehow `close' to a Hilbert space (as will be made precise below) and $R$ can be embedded into a  bounded subset of this Hilbert space, then totally boundedness of $(W,d_R/G)$ can be deduced from compactness of the space of sums of elements from $R$ modulo $G$.

For a subset $Y$ of a linear space $X$ and $k\in \N$ we define 
\[
k\cdot Y:=\{y_1+\ldots+y_k\mid y_i\in Y\}.
\]
Note that when $Y$ is convex, $k\cdot Y$ is just equal to $kY$.
Let $X$ be a normed space and let $H$ be a Hilbert space.
We call $W\subset X$ \emph{$H$-small}\footnote{The notion of $H$-smallness can be seen as a qualitative refinement of relative weak compactness, by a classical result of Grothendieck \cite{G52}. Moreover, related notions have been used elsewhere in functional analysis, cf. \cite{DFJP74}.}, if here exists a contractive linear map $T:H\to X$ and a function $c:(0,\infty)\to \N$ such that $W\subset c(\eps)T(B(H))+\eps B(X)$ for each $\eps>0$. 
Note that $T$ gives rise to a contractive linear map $T^*:X^*\to H^*$, \emph{ the adjoint of $T$}, defined by $f\mapsto (h\mapsto f(T(h)))$ for $f\in X^*$ and $h\in H$. 
Identifying $H$ with $H^*$ (which we will always do) this gives a contractive linear map $TT^*:X^*\to X$.
When we talk about a $H$-small space we will implicitly assume the presence of the maps $T,T^*$ and $c$.

\begin{theorem}\label{thm:compact banach}
Let $(X,\|\cdot\|)$ be a Banach space and let $G$ be a subgroup of $\aut(X)$.
Let $R\subseteq B(X^*)$ be $G$-stable and let $W\subset X$ be $G$-stable and weakly compact.
If there exists a Hilbert space $H$ such that $W$ is $H$-small and if $(k\cdot (TT^*(R)),d_R/G)$ is compact for each $k\in \N$, then $(W,d_R/G)$ is compact.
\end{theorem}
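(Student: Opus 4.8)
The plan is to reduce everything to Theorem~\ref{thm:compact banach v1}: since $W$ is $G$-stable and weakly compact and $R\subseteq B(X^*)$ is $G$-stable, it suffices to prove that $(W,d_R/G)$ is totally bounded. So fix $\eps>0$; the goal is to cover $W$ by finitely many $d_R/G$-balls of radius $O(\eps)$.

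First I would use $H$-smallness to push the problem into the Hilbert space $H$. Put $m:=c(\eps)$. Then every $w\in W$ has the form $w=mT(h_w)+e_w$ with $h_w\in B(H)$ and $\|e_w\|\le\eps$, and since $R\subseteq B(X^*)$ this gives $d_R(w,mT(h_w))=\|e_w\|_R\le\|e_w\|\le\eps$. Next I would apply the Hilbert-space weak regularity lemma, Lemma~\ref{lem:weak Szem}, to $h_w\in B(H)$ and the bounded set $D:=T^*(R)\subseteq B(H)$ (bounded because $T^*$ is contractive). Running, say, a fixed step-size greedy procedure against $D$ produces, with a bound $k=k(\eps)\in\N$ independent of $w$, a vector $\tilde h_w$ which is a sum of at most $k$ elements of $D$ each scaled by $\eps/m$, with $\|h_w-\tilde h_w\|_D\le\eps/m$. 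The reason to pass to $D=T^*(R)$ is the identity $\|T(h)\|_R=\sup_{r\in R}|r(T(h))|=\sup_{r\in R}|\la T^*r,h\ra_H|=\|h\|_D$, valid for all $h\in H$. Hence $d_R\big(mT(h_w),mT(\tilde h_w)\big)=m\|h_w-\tilde h_w\|_D\le\eps$, and, because $TT^*$ is linear, $mT(\tilde h_w)=\eps\sum_{j}\theta_j\,TT^*(r_j)$ with $r_j\in R$, scalars $\theta_j$ of modulus $1$, and at most $K:=\lceil m^2/\eps^2\rceil$ summands.

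The delicate point — and where I expect the main obstacle to lie — is to recognise $mT(\tilde h_w)$ as an element of a set to which the hypothesis literally applies, i.e.\ to place it in $K'\cdot TT^*(R)$ for a single $K'=K'(\eps)$. One genuinely cannot approximate an arbitrary element of $B(H)$ by an unweighted sum of elements of $D$, so the weights $\theta_j$ and the factor $\eps$ are really present. I would absorb them by enlarging $R$ to $R':=\{\theta r:|\theta|\le1,\ r\in R\}$: this changes neither $\|\cdot\|_R$ (hence not $d_R$) nor $G$-stability, and it contains $0$. Then $\eps\theta_j TT^*(r_j)=TT^*(\eps\theta_j r_j)$ with $\eps\theta_j r_j\in R'$, so $mT(\tilde h_w)\in K\cdot TT^*(R')$. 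The real work is to verify that the hypothesis ``$(k\cdot TT^*(R),d_R/G)$ compact for every $k$'' is inherited by $R'$, i.e.\ that $(k\cdot TT^*(R'),d_R/G)$ is compact for every $k$. This is exactly the place that forces one to use compactness for \emph{all} $k$ rather than just $k=1$: unlike in the group-free case, the Minkowski sum of two $d_R/G$-compact sets need not be $d_R/G$-compact, so one cannot split $K\cdot TT^*(R')$ into separately-compact pieces and must instead exhibit it (up to the $\eps$-rescaling, which is a $d_R/G$-contraction once one uses that the elements of $G$ commute with scalar multiplication) as a subset of a single $K''\cdot TT^*(R)$. Carrying out this bookkeeping — using $R\subseteq B(X^*)$, the fact that $TT^*$ is contractive and weak-$*$-to-weak continuous, and the full hypothesis — is the technical heart of the proof.

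Granting that, the conclusion is immediate. The set $\{mT(\tilde h_w):w\in W\}$ lies in the $d_R/G$-compact set (obtained above), hence is totally bounded, so it has a finite $\eps$-net $F$. For every $w\in W$ the triangle inequality applied to $d_R/G(w,mT(h_w))\le\eps$, $d_R/G(mT(h_w),mT(\tilde h_w))\le\eps$ and $d_R/G(mT(\tilde h_w),F)\le\eps$ shows that $F$ is a $3\eps$-net for $(W,d_R/G)$. As $\eps>0$ was arbitrary, $(W,d_R/G)$ is totally bounded, and Theorem~\ref{thm:compact banach v1} then gives that it is compact.
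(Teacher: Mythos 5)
Your proposal is, in all essentials, the paper's own proof: reduce to total boundedness via Theorem~\ref{thm:compact banach v1}; use $H$-smallness to replace $w\in W$ by $c(\eps)T(h_w)$ up to an $\eps$-error in $\|\cdot\|$, hence in $\|\cdot\|_R$; run Lemma~\ref{lem:weak Szem} in $H$ against $T^*(R)$ together with the identity $\|T(h)\|_R=\|h\|_{T^*(R)}$; and finish with a finite $d_R/G$-net of $k\cdot TT^*(R)$ for $k\approx c(\eps)^2\eps^{-2}$ and the triangle inequality, arriving at a $3\eps$-net exactly as in the paper.

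The one step you defer --- absorbing the unimodular weights by passing to $R':=\{\theta r:|\theta|\le 1,\ r\in R\}$ and checking that the compactness hypothesis survives --- is precisely the step the paper dispatches in a single sentence at the start of its proof: it assumes without loss of generality that $R$ is closed under multiplication by elements of the closed unit disc, observing that this changes neither $\|\cdot\|_R$ nor the compactness of $(k\cdot TT^*(R),d_R/G)$. So what you call the ``technical heart'' is, in the paper, a normalization; the engine of the argument is Lemma~\ref{lem:weak Szem}, which you use correctly (your fixed-step greedy is a harmless variant of the paper's greedy with coefficients $\langle a_i,r\rangle$). Two cautions about your sketch of the deferred step. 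First, the route you propose --- exhibiting $K\cdot TT^*(R')$ inside a single unweighted $K''\cdot TT^*(R)$ --- cannot work in general: $\theta\,TT^*(r)$ with $|\theta|<1$ need not lie in any sum of elements of $TT^*(R)$; the correct move is to perform the disc-closure of $R$ \emph{before} invoking the hypothesis, not to undo it afterwards. Second, your parenthetical use of ``elements of $G$ commute with scalar multiplication'' is an assumption not present in the theorem (members of $\aut(X)$ are only isometries); the paper avoids needing it by building the scalars into $R$ from the outset. Neither point changes the verdict that your argument is the paper's argument, but in a self-contained write-up you should state the disc-closure normalization up front (and, as the paper implicitly does, regard the compactness hypothesis as being verified for the disc-closed $R$ in applications) rather than leave it as an unproved reduction at the end.
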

Observe that when $X$ is a Hilbert space, Theorem \ref{thm:compact banach} reduces to \cite[Theorem 2.1]{RS12}.

Before we give a proof of  Theorem \ref{thm:compact banach}, let us remark that the compactness of the $L^p$-graphon space proved by Borgs, Chayes, Cohn and Zhao \cite{BCCZ14} follows almost immediately from it.
Let $R\subset L^{\infty}([0,1]^2)$ be defined by 
\[
R:=\{\chi_{A\times B}\mid A,B\subseteq [0,1] \text{ measurable} \}.
\]
This makes $\|\cdot \|_R$ into the cut norm.
Let us denote the group of measure preserving bijections $\tau:[0,1]\to[0,1]$ by $S_{[0,1]}$.
Then $d_R/S_{[0,1]}$ is equal to $\delta_\square$, the cut metric. 
The result of Borgs, Chayes, Cohn and Zhao is then equivalent to the following:
\begin{cor}\label{cor:BCCZ}
Let $p>1$. Then $(B(L^p([0,1]^2)),d_R/S_{[0,1]})$ is compact.
\end{cor}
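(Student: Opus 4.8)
The plan is to derive the corollary from Theorem~\ref{thm:compact banach}, applied with $X := L^1([0,1]^2)$, $G := S_{[0,1]}$ (acting on $[0,1]^2$, hence on $X$, by $\tau\cdot w(x,y) := w(\tau^{-1}x,\tau^{-1}y)$), $W := B(L^p([0,1]^2)) \subseteq L^p([0,1]^2) \subseteq L^1([0,1]^2) = X$, the set $R$ as defined above, $H := L^2([0,1]^2)$, and $T : H \to X$ the inclusion map. Since $\lambda([0,1]^2)=1$ we have $\|\cdot\|_1 \le \|\cdot\|_2$, so $T$ is contractive; computing the adjoint shows $T^*$ is the inclusion $L^\infty([0,1]^2)\hookrightarrow L^2([0,1]^2)$, hence $TT^*$ is the inclusion $L^\infty([0,1]^2)\hookrightarrow L^1([0,1]^2)$ and $TT^*(R)=R$ inside $X$. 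The elementary hypotheses are immediate: each $\tau\in S_{[0,1]}$ preserves $\lambda$, hence the $1$-norm and, as it permutes rectangles ($\tau\chi_{A\times B}=\chi_{\tau A\times\tau B}$), the seminorm $\|\cdot\|_R$, so $G\le\aut(X,d_R)$ and $R$ is $G$-stable; moreover $\|\chi_{A\times B}\|_\infty\le 1$, so $R\subseteq B(L^\infty)=B(X^*)$; and $\tau$ preserves the $p$-norm, so $W$ is $G$-stable, while $d_R/G$ restricted to $W$ is exactly the cut metric $\delta_\square$. It remains to verify that (i)~$W$ is weakly compact in $X$, (ii)~$W$ is $H$-small, and (iii)~$(k\cdot R,d_R/G)$ is compact for each $k\in\N$.

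For (i), $B(L^p([0,1]^2))$ is bounded in $L^1$ and, by H\"older's inequality, uniformly integrable ($\int_E|f|\,d\lambda\le\lambda(E)^{1/p^*}$ for $f\in B(L^p)$), hence relatively weakly compact in $L^1$ by the Dunford--Pettis theorem; it is convex and norm-closed in $L^1$ (if $f_n\to f$ in $L^1$ with $\|f_n\|_p\le 1$ then $\|f\|_p\le 1$ by Fatou along an a.e.\ convergent subsequence), hence weakly closed, and therefore weakly compact. For (ii), given $\eps>0$ set $M:=\lceil\eps^{-1/(p-1)}\rceil$ and write $f\in B(L^p)$ as $f=f\chi_{\{|f|\le M\}}+f\chi_{\{|f|>M\}}$. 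By H\"older, $\|f\chi_{\{|f|>M\}}\|_1\le\lambda(\{|f|>M\})^{1/p^*}\le M^{-(p-1)}\le\eps$, while $\|f\chi_{\{|f|\le M\}}\|_2\le\max\{1,M^{1-p/2}\}$ (treating $p\ge 2$ and $1<p<2$ separately), so with $c(\eps):=\lceil\max\{1,M^{1-p/2}\}\rceil$ we obtain $W\subseteq c(\eps)\,T(B(H))+\eps B(X)$. This, together with the uniform integrability in (i), is exactly where $p>1$ is used.

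For (iii), fix $k$ and a sequence $w_n=\sum_{i=1}^{k}\chi_{A_i^n\times B_i^n}\in k\cdot R$. Let $\{P_\sigma^n : \sigma\subseteq[2k]\}$ be the atoms of the Boolean algebra generated by $A_1^n,\dots,A_k^n,B_1^n,\dots,B_k^n$, indexed canonically so that $A_i^n=\bigcup_{\sigma\ni i}P_\sigma^n$ and $B_i^n=\bigcup_{\sigma\ni k+i}P_\sigma^n$ (empty atoms allowed). Then $w_n=\sum_{\sigma,\sigma'}c_{\sigma,\sigma'}\chi_{P_\sigma^n\times P_{\sigma'}^n}$ with $c_{\sigma,\sigma'}:=|\{i : i\in\sigma,\ k+i\in\sigma'\}|$ \emph{independent of $n$}. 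Passing to a subsequence, $\lambda(P_\sigma^n)\to\alpha_\sigma$ for each $\sigma$, with $\sum_\sigma\alpha_\sigma=1$; pick a partition $\{Q_\sigma\}$ of $[0,1]$ with $\lambda(Q_\sigma)=\alpha_\sigma$ and put $w:=\sum_{\sigma,\sigma'}c_{\sigma,\sigma'}\chi_{Q_\sigma\times Q_{\sigma'}}$, which lies in $k\cdot R$ since $w=\sum_{i=1}^k\chi_{A_i\times B_i}$ with $A_i:=\bigcup_{\sigma\ni i}Q_\sigma$ and $B_i:=\bigcup_{\sigma\ni k+i}Q_\sigma$. Choosing measure preserving bijections $\tau_n$ of $[0,1]$ with $\sum_\sigma\lambda(\tau_n(P_\sigma^n)\triangle Q_\sigma)\to 0$ (possible because the masses converge), the functions $\tau_n w_n$ and $w$ differ only on a set of measure tending to $0$ and are both bounded by $k$, so $\|\tau_n w_n-w\|_R\le\|\tau_n w_n-w\|_1\to 0$ and hence $(d_R/G)(w_n,w)\to 0$. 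Thus $(k\cdot R,d_R/G)$ is sequentially, and so (being a pseudometric space) compact, and Theorem~\ref{thm:compact banach} yields the corollary.

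The main obstacle is step~(iii): although it concerns only finite combinatorial data, one must organise it so that the multiplicity matrix $(c_{\sigma,\sigma'})$ is manifestly $n$-independent --- this is precisely what guarantees that the limit $w$ lands back in $k\cdot R$ and not merely in its closure --- and one must construct the measure preserving bijections realising the convergence of the part-sizes. Step~(i) also needs some care, as $L^1$ is not reflexive, so weak compactness must come from Dunford--Pettis via uniform integrability (again using $p>1$) together with Mazur's theorem for the weak closedness, rather than directly from Banach--Alaoglu.
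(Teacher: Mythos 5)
Your proof is correct and follows the paper's own route: apply Theorem \ref{thm:compact banach} with $H=L^2$ and $T$ the inclusion (so $TT^*(R)=R$), verifying $H$-smallness of $B(L^p)$, its weak compactness, and compactness of $(k\cdot R,d_R/G)$. The only substantive deviations are that you keep $X=L^1$ for every $p$ and therefore invoke Dunford--Pettis for weak compactness, whereas the paper switches to $X=L^2$ when $p\geq 2$ so that Banach--Alaoglu and reflexivity suffice, and handles $p=\infty$ separately (your formulas for $M$ and $c(\eps)$ tacitly assume $p<\infty$, though that case is trivial since $B(L^\infty)\subseteq B(L^2)$); and that your explicit atom-decomposition argument for $k\cdot R$ spells out what the paper obtains more briefly by citing the compactness of $(R^k,d_R/S_{[0,1]})$ and the continuity of the addition map.
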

\begin{proof}
In case $p\geq 2$ we take $X=L^2$, otherwise we set $X=L^1$.
Write $W=B(L^p)\subset X$ and let $H=L^2$. Then $H$ is a Hilbert space and $W$ is $H$-small by \cite{BCCZ14}, or see Lemma \ref{lem:bound1L} (the map $T$ being the identity.) Note also that $T^*T$ restricted to $R$ is the identity.
Since any measurable set $A$ can be mapped onto any interval of length $\l(A)$ by a measure preserving bijection, cf. \cite{N98}, it follows that $(R^k,d_R/S_{[0,1]})$ is compact and hence $(k\cdot R,d_R/S_{[0,1]})$ as well, as it is a continuous image of $(R^k,d_R/S_{[0,1]})$ (see \cite{RS12}).
Let us first assume that $p<\infty$. Then, as $(B(L^p),w_{p^*})$ is compact by the Banach-Alaoglu theorem, it follows that $(B(L^p),w_\infty)$ is compact. (Since $w_\infty$ induces a weaker topology than $w_{p^*}$.) 
In case $p=\infty$, we may use that $B(L^\infty)\subset B(L^{s})$, for any fixed $s>1$, is a closed convex set with respect to the $s$-norm. 
This implies that $(B(L^\infty),w_{s^*})$ is compact and hence $(B(L^\infty),w_{\infty})$ is compact, as desired.
Theorem \ref{thm:compact banach} now implies that $(B(L^p),d_R/G)$ is compact.
\end{proof}

With little additional effort we can derive something similar as Corollary C.8 in \cite{BCCZ14}:
\begin{cor}
Given a function $\kappa:(0,\infty)\to (0,\infty)$, let 
\[W_\kappa:=\bigcap_{\eps>0} \kappa(\eps)B(L^{\infty}_\R([0,1]^2))+\eps B(L^1_\R([0,1]^2)).\]
Then $(W_\kappa,d_R/S_{[0,1]})$ is compact.
\end{cor}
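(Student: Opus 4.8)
The plan is to apply Theorem~\ref{thm:compact banach} with $X=L^1_\R([0,1]^2)$, $H=L^2_\R([0,1]^2)$, the group $G=S_{[0,1]}$ acting diagonally on $[0,1]^2$ (hence on $X$ and on $H$), the set $R$ as defined above, and $W=W_\kappa$. The bulk of the verification mirrors the proof of Corollary~\ref{cor:BCCZ}; the one genuinely new ingredient is the weak compactness of $W_\kappa$ in the non-reflexive space $L^1$.

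First I would dispose of the routine hypotheses. Since $\lambda([0,1]^2)=1$ we have $\|\chi_{A\times B}\|_\infty\le 1$, so $R\subseteq B(X^*)=B(L^\infty_\R)$, and $R$ is $G$-stable because a measure preserving bijection $\tau$ carries $\chi_{A\times B}$ to $\chi_{\tau A\times\tau B}\in R$; the same fact shows $G$ preserves the $1$-norm, so $G\subseteq\aut(X)$. The set $W_\kappa$ is $G$-stable because each $\tau\in S_{[0,1]}$ preserves both $B(L^\infty)$ and $B(L^1)$, hence each set $\kappa(\eps)B(L^\infty)+\eps B(L^1)$ and their intersection. For $H$-smallness, let $T\colon L^2\hookrightarrow L^1$ be the (contractive, since $\lambda$ is a probability measure) inclusion and put $c(\eps):=\lceil\kappa(\eps)\rceil$; then $B(L^\infty)\subseteq B(L^2)$ gives $W_\kappa\subseteq\kappa(\eps)B(L^\infty)+\eps B(L^1)\subseteq c(\eps)T(B(L^2))+\eps B(L^1)$ for every $\eps>0$. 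Since $T$ is the inclusion, its adjoint $T^*\colon L^\infty\to L^2$ and the map $TT^*\colon L^\infty\to L^1$ are again inclusions, so $TT^*(R)=R$ as a subset of $L^1$; and $(k\cdot R,d_R/G)$ is compact for every $k\in\N$ by exactly the argument given in the proof of Corollary~\ref{cor:BCCZ} (a measurable set can be carried onto an interval of the same measure by a measure preserving bijection, which makes $(R^k,d_R/G)$ compact, and $k\cdot R$ is a continuous image of $R^k$).

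The heart of the proof is to show $W_\kappa$ is weakly compact in $L^1_\R$. It is bounded, since $W_\kappa\subseteq\kappa(1)B(L^\infty)+B(L^1)\subseteq(\kappa(1)+1)B(L^1)$, and convex, being an intersection of sums of convex sets. To see it is norm closed, I would note that for fixed $\eps$ the pointwise minimisation of $\|f-g\|_1$ over $\{g\colon|g|\le\kappa(\eps)\}$, achieved at $g(x)=\operatorname{sign}(f(x))\min\{|f(x)|,\kappa(\eps)\}$, identifies $\kappa(\eps)B(L^\infty)+\eps B(L^1)$ with $\{f\colon\int(|f|-\kappa(\eps))_+\,d\lambda\le\eps\}$; as $f\mapsto\int(|f|-\kappa(\eps))_+\,d\lambda$ is $1$-Lipschitz on $L^1$, this set, and hence $W_\kappa$, is norm closed, and being convex it is then weakly closed by Mazur's theorem. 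Finally $W_\kappa$ is uniformly integrable: given $\delta>0$, set $\eps=\delta/2$ and $\eta=\delta/(2\kappa(\eps))$, and for $f=g+h\in W_\kappa$ with $\|g\|_\infty\le\kappa(\eps)$, $\|h\|_1\le\eps$ estimate $\int_E|f|\le\kappa(\eps)\lambda(E)+\eps\le\delta$ whenever $\lambda(E)\le\eta$. By the Dunford--Pettis theorem $W_\kappa$ is relatively weakly compact, hence (being weakly closed) weakly compact. With all hypotheses of Theorem~\ref{thm:compact banach} verified, the conclusion is that $(W_\kappa,d_R/S_{[0,1]})$ is compact.

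I expect the main obstacle to be precisely this weak compactness step: since $L^1$ is not reflexive one cannot simply invoke the Banach--Alaoglu theorem as in Corollary~\ref{cor:BCCZ}, so one must use the Dunford--Pettis characterisation of relatively weakly compact subsets of $L^1$ together with the short argument that $W_\kappa$ is norm closed (the identity $\kappa(\eps)B(L^\infty)+\eps B(L^1)=\{f\colon\int(|f|-\kappa(\eps))_+\le\eps\}$ being the one point that needs care). Everything else is bookkeeping carried over from the proof of Corollary~\ref{cor:BCCZ}.
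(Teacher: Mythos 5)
Your proof is correct, and the overall skeleton (apply Theorem~\ref{thm:compact banach} with $X=L^1$, $H=L^2$, $T$ the inclusion, reusing the verification of the routine hypotheses from Corollary~\ref{cor:BCCZ}) is exactly the paper's. Where you genuinely diverge is the weak-compactness step, which is indeed the heart of the matter. The paper argues by hand: it embeds $B(L^1)$ into $B\bigl((L^\infty)^*\bigr)$, which is weak-$*$ compact by Banach--Alaoglu, takes a weak-$*$ limit point $\mu$ of $W_\kappa$ (a priori only a finitely additive signed measure), uses the uniform absolute continuity of $W_\kappa$ to show $\mu$ is $\sigma$-additive, extracts a density $f$ via Radon--Nikodym, and then verifies directly by an integral estimate on the sets $\{f>\kappa(\eps)\}$ and $\{f<-\kappa(\eps)\}$ that $f\in W_\kappa$. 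You instead black-box this as Dunford--Pettis (your uniform integrability computation is the same estimate the paper uses for $\sigma$-additivity) and replace the paper's final verification that the limit lies in $W_\kappa$ by the observation that $\kappa(\eps)B(L^\infty)+\eps B(L^1)=\{f:\int(|f|-\kappa(\eps))_+\,d\lambda\le\eps\}$ is a sublevel set of a $1$-Lipschitz convex functional, hence norm closed and convex, hence weakly closed by Mazur. The two routes are morally equivalent --- the paper is essentially reproving the relevant special case of Dunford--Pettis --- but yours is shorter given the cited theorems, and the sublevel-set identity is a cleaner way to see that $W_\kappa$ is weakly closed than the paper's computation in \eqref{eq:W weak closed}; the paper's version has the virtue of being self-contained modulo Banach--Alaoglu and Radon--Nikodym.
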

\begin{proof}
Throughout the proof we assume that we work over the real numbers.
Let again $H=L^2$ and $X=L^1$. Then $W_\kappa\subset X$ is clearly $H$-small by \eqref{eq:nesting Lp}.
So by Theorem \ref{thm:compact banach} it suffices to check that $(W_\kappa,w_{\infty})$ is compact.
We may assume that $W_\kappa$ is contained in $B(L^1)$. 
Since $(B(L^\infty)^*,w_{\infty})$ is compact by the Banach-Alaoglu theorem and contains $B(L^1)$, it suffices to show that $W_\kappa$ is weakly closed.

To this end let $\mu$ be an element of the weak closure of $W_\kappa$.
Then $\mu$ is a finitely additive signed measure on $[0,1]^2$.
We will show that $\mu$ is in fact sigma additive.
To see this, let $\eps>0$ and choose $\delta=\eps\kappa(\eps)^{-1}$. 
Then for each measurable set $U\subset [0,1]^2$ there exists $w\in W_\kappa$ such that $|\mu(U)-\int_U wd\lambda|\leq \eps$.
This implies that if $\lambda(U)<\delta$, then $\mu(U)\leq 3\eps$, as for each $w\in W_\kappa$, $|\int_U wd\lambda| \leq\lambda(U)\kappa(\eps)+\eps=2\eps$.
Let now $(U_n)_{n\in \N}$ be a collection of pairwise disjoint measurable sets. 
Write $U=\cup_{n\in \N}U_n$. 
Then, by sigma additivity of the Lebesgue measure, we have $\lambda(\cup_{n>N}U_n)\to 0$, as $N$ tends to infinity.
This implies that $\mu(\cup_{n>N}U_n)\to 0$, as $N$ tends to infinity.
So $\mu(U)=\sum_{n\in \N}\mu(U_n)$, showing that $\mu$ is sigma additive.

Since $\mu$ is finite, the Radon-Nikodym theorem implies the existence of an integrable function $f$ such that $\int_{U}fd\lambda=\mu(U)$ for each measurable $U$.

We will now show that $f$ is again contained in $W_{\kappa}$.
Let $\eps>0$ and define $U:=\{x\mid f(x)>\kappa(\eps)\}$, $V:=\{x\mid f(x)<-\kappa(\eps)\}$ and let $\delta >0$.
Then there exists $w\in W_\kappa$ such that $|\int_U(f-w)d\l|\leq\delta$ and $|\int_V(f-w)d\l|\leq \delta$.
This implies
\begin{align}
\int_{U\cup V}|f|d\l&=\int_{U}fd\l-\int_V fd\l\leq \int_U wd\l-\int_V wd\l+2\delta	\nonumber
\\
&\leq \int_{U\cup V}|w|d\l +2\delta \leq \lambda(U\cup V)\kappa(\eps)+\eps+2\delta.\label{eq:W weak closed}
\end{align}
Since \eqref{eq:W weak closed} holds for any $\delta>0$, this shows that $f\in W\kappa$ and finishes the proof.
\end{proof}
\begin{rem}
The requirement that we work over the real numbers in the corollary above can be omitted if we choose to define $W_\kappa$ in the following way:
\[
W_\kappa:=\bigcap_{\eps>0}(\kappa)(\eps)\big(B(L^\infty_\R([0,1]^2))+i B(L^\infty_\R([0,1]^2))\big)+\eps\big(B(L^1_\R([0,1]^2))+iB(L^1_\R([0,1]^2))\big).
\]

\end{rem}
\subsection{Proofs of Theorem \ref{thm:compact banach v1} and \ref{thm:compact banach}}\label{sec:proofs}
The proofs of Theorems \ref{thm:compact banach v1} and \ref{thm:compact banach} follow the same pattern as the proof of Theorem 2.1 in \cite{RS12}.
Let us start with the proof of Theorem \ref{thm:compact banach v1}.

\begin{proof}[Proof of Theorem \ref{thm:compact banach v1}]
Since $(W,d_R/G)$ is totally bounded it suffices to show that $(W,d_R/G)$ is complete.
Let $a_1,a_2,\ldots$ be a Cauchy sequence in $(W,d_R/G)$. We may assume that $(d_{R}/G)(a_i,a_{i+1})< 2^{-i}$ for each $i$. 
Next find iteratively $g\in G$ such that $d_{R}(a_i,ga_{i+1})<2^{-i}$ and replace $a_{i+1}$ by $ga_{i+1}$.
Then $d_{R}(a_i,a_{j})<2^{-i+1}$ for all $i$ and $j\geq i$ and hence this sequence is Cauchy with respect to $d_{R}$.
We will show that this sequence is also convergent. 

Let for $n\in \N$, $A_n$ denote the weak closure of the set $\{a_n,a_{n+1},\ldots\}$.
By weak compactness, $A:=\cap_{n\in \N} A_n$ is not empty.
Let $a\in A$.
We will show that $a_n$ converges to $a$ with respect to $d_{R}$.
Let $\eps>0$.
Choose $N$ such that for all $n,m>N$, $d_{R}(a_n,a_m)\leq \eps$.
We will show that $d_R(a_n,a)\leq2\eps$ for all $n>N$.
To this end fix $r\in R$. 
Then there exists $m\geq N$ such that $|(r,a_m-a)|\leq \eps$.
Then for any $n>N$ we have
\[
|(r,a-a_n)|\leq |r,a-a_m)|+|(r,a_m-a_n)|\leq 2\eps,
\]
showing that $d_R(a_n,a)\leq 2\eps$. Hence $(a_n)$ converges to $a$ with respect to $d_R$. 
\end{proof}

To prove Theorem \ref{thm:compact banach}, we need the following lemma from \cite{RS12}, which may be viewed as a weak regularity lemma in a Hilbert space setting. In particular, choosing appropriate $R$, we recover results from \cite{FK99} and \cite{DKKV05} respectively.

\begin{lemma}[\cite{RS12}]\label{lem:weak Szem}
Let $H$ be any Hilbert space.
Let $R\subseteq B(H)$ be closed under multiplication by elements of $\{z\in \C\mid |z|\leq 1\}$ (by elements of $[-1,1]$ if $H$ is real). Then for any $k\in \N$,
\[
B(H)\subseteq k\cdot R +\frac{1}{\sqrt{k}}B(H,\|\cdot\|_R).
\]
\end{lemma}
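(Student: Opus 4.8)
The plan is to prove this by a greedy energy-increment argument. Given $x \in B(H)$, I would construct a sequence of approximations $x_0 = 0, x_1, x_2, \ldots$ where each $x_i$ lies in $i \cdot R$ (a sum of $i$ elements of $R$), maintaining the invariant that $x - x_i$ is orthogonal to $x_i$, or more precisely that $x_i$ is the projection-type correction accumulated so far. At step $i$, if $\|x - x_i\|_R > \frac{1}{\sqrt{k}}$, there exists $r \in R$ with $|(r, x - x_i)| > \frac{1}{\sqrt{k}}$; after multiplying $r$ by a suitable scalar of modulus $1$ (allowed since $R$ is closed under multiplication by the unit disc, or $[-1,1]$ in the real case) we may assume $(r, x-x_i)$ is real and $> \frac{1}{\sqrt{k}}$. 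Then set $x_{i+1} := x_i + \alpha r$ for an appropriately chosen scalar $\alpha \in [0,1]$ (again using closure of $R$ under the scalar multiplication to keep $\alpha r \in R$, or rather to write $x_{i+1} \in (i+1)\cdot R$).

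The key point is the energy increment: I would track the quantity $\|x - x_i\|^2$ and show it decreases by more than $\frac{1}{k}$ at each step as long as the $R$-seminorm of the residual exceeds $\frac{1}{\sqrt{k}}$. Concretely, choosing $x_{i+1} = x_i + tr$ and optimising over $t \in \R$, the decrease $\|x - x_i\|^2 - \|x - x_{i+1}\|^2$ equals $2t\,(r, x - x_i) - t^2 \|r\|^2$; since $\|r\| \le 1$ and $(r, x-x_i) > \frac{1}{\sqrt k}$, taking $t = (r, x - x_i)$ (which lies in $(0,1]$) gives a decrease of at least $(r, x-x_i)^2 \ge \frac{1}{k}$. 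Because $\|x - x_0\|^2 = \|x\|^2 \le 1$, the process must terminate after at most $k$ steps, at which point $\|x - x_j\|_R \le \frac{1}{\sqrt{k}}$ for some $j \le k$. Since $x_j \in j \cdot R \subseteq k \cdot R$ (noting $0 \in R$, as $R$ is closed under multiplication by $0$, so $j \cdot R \subseteq k \cdot R$), and $x - x_j \in \frac{1}{\sqrt{k}} B(H, \|\cdot\|_R)$, we obtain $x \in k \cdot R + \frac{1}{\sqrt{k}} B(H, \|\cdot\|_R)$ as desired.

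The main obstacle I anticipate is bookkeeping the scalar multiplications so that each summand genuinely lies in $R$: one needs $t r \in R$ where $t = (r, x-x_i) \in (0,1]$, which is exactly guaranteed by the hypothesis that $R$ is closed under multiplication by the closed unit disc (resp. $[-1,1]$), and the phase rotation to make the inner product real and positive uses the same hypothesis. A secondary subtlety is that we do not control whether $\|r\| = 1$ or is smaller; but this only helps, since a smaller $\|r\|$ makes the quadratic term $-t^2\|r\|^2$ less negative, so the bound $t = (r, x - x_i)$ still yields decrease at least $(r,x-x_i)^2 > \frac 1k$. One should also double-check the edge case where the residual seminorm never exceeds $\frac{1}{\sqrt k}$ at the outset, in which case $x_0 = 0 \in k\cdot R$ works trivially.
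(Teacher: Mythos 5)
Your proposal is correct and is essentially the paper's own argument: the same greedy iteration with step $\langle a_i,r\rangle r$ (your phase rotation plus $t=(r,x-x_i)$ is exactly this), the same energy decrement $|\langle a_i,r\rangle|^2(2-\|r\|^2)\geq 1/k$, and the same use of Cauchy--Schwarz and closure of $R$ under the unit disc to keep each summand in $R$. The only cosmetic difference is that you track the accumulated approximation $x_i$ while the paper tracks the residual $a_i=x-x_i$.
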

For convenience of the reader we will give a proof.
\begin{proof}
Let us denote the inner product on $H$ by $\langle\cdot,\cdot\rangle$.
Let $a\in B(H)$. 
Write $a_0=a$.
If $\|a_i\|_R>\frac{1}{\sqrt{k}}$ for some $i\geq 0$, then there exists $r\in R$ such that $|\langle a_i,r\rangle|>\frac{1}{\sqrt{k}}$.
Set $a_{i+1}=a_i-\langle a_i,r\rangle r$.
Note that by induction we have that $a_{i+1}-a_0\in i\cdot R\subset k\cdot R$, as $|\langle a_i,r\rangle|\leq 1$ by Cauchy-Schwarz.
Then 
\[
\|a_{i+1}\|_2^2=\|a_i\|_2^2-2|\langle a_i,r\rangle|^2+ |\langle a_i,r\rangle|^2\|r\|^2_2=\|a_i\|^2_2-|\langle a_i,r\rangle|^2(2-\|r\|_2^2)< \|a_i\|_2^2-1/k,
\]
by definition of $r$ and the fact that $R\subseteq B(H)$.
By induction we have that $\|a_{i+1}\|_2^2<1-i/k$.
This shows that for some $i\leq k$ we must have $\|a_i\|_R\leq \frac{1}{\sqrt{k}}$ and hence finishes the proof.
\end{proof}

We can now prove Theorem \ref{thm:compact banach}. 
\begin{proof}[Proof of Theorem \ref{thm:compact banach}]
By Theorem \ref{thm:compact banach v1} it suffices to show that $(W,d_R/G)$ is totally bounded.
We may assume that $R$ is closed under multiplication by elements of $\{z\in \C\mid |z|\leq 1\}$ (elements of $[-1,1]$ if our spaces are real), as this does not changes the norm induced by $R$. Neither does it change the compactness of $(k\cdot (TT^*(R)),d_R/G)$ for any $k$.
To show that $(W,d_R/G)$ is totally bounded, choose $\eps>0$. 
By assumption there exists a Hilbert space $H$ such that $W$ is $H$-small.
This means there exists a contractive linear map $T:H\to X$ and a constant $c>0$ such that $W\subset cT(B(H))+\eps B(X)$. 
As $(k\cdot (TT^*(R)),d_R/G)$ is compact there exists a finite set $F\subset k\cdot (TT^*(R))$ such that $k\cdot (TT^*(R))\subset GF+\frac{\eps}{c} B(X,\|\cdot\|_R)$.
Setting $k:=\lceil c^2\eps^{-2}\rceil$, we have by Lemma \ref{lem:weak Szem}, since $T(B(H,\|\cdot\|_{T^*(R)}))\subseteq B(X,\|\cdot\|_R)$,
\begin{align*}
W&\subseteq cT(B(H))+\eps B(X)\subset c(k\cdot (TT^*(R))+k^{-1/2}T(B(H),\|\cdot\|_{T^*(R)}))+\eps B(X)\subseteq 
\\
&c(GF+\frac{\eps}{c}B(X,\|\cdot\|_R)+k^{-1/2} B(X,\|\cdot\|_R))+\eps B(X)\subseteq GcF+3\eps B(X,\|\cdot\|_R).
\end{align*}
This shows that $(W,d_R/G)$ is totally bounded and finishes the proof.
\end{proof}
\begin{rem}
The proof shows that if $R$ is already closed under multiplication by elements of $\{z\in \C\mid |z|\leq 1\}$, then it is enough to demand that $(k\cdot TT^*(R),d_R/G)$ is merely totally bounded.
\end{rem}

\subsection{Weak and strong regularity}
The following is implicit in the proof of Theorem \ref{thm:compact banach}, and may be viewed as an extension of the weak regularity lemma from the Hilbert space setting to a Banach space setting.

\begin{lemma}\label{lem:weak banach}
Let $(X,\|\cdot\|)$ be a Banach space, let $W\subset X$ and let $R\subseteq B(X^*)$.
Suppose there exist a Hilbert space $H$ such that $W$ is $H$-small.
Then for each $\eps>0$ there exists $k\leq \lceil c(\eps)/\eps \rceil^2$ such that for each $w\in W$ there exists $\alpha\in \{z\in \C\mid |z|\leq 1\}^l$ and $x_1,\ldots,x_l\in TT^*(R)$, with $l\leq k$, 
such that $\|w-c\sum_{i=1}^l \alpha_ix_i\|_R\leq 2\eps$.
\end{lemma}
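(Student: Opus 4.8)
The plan is to extract the statement directly from the argument already carried out in the proof of Theorem \ref{thm:compact banach}, isolating the finitary approximation step before the compactness machinery (Theorem \ref{thm:compact banach v1}) is invoked. First I would fix $\eps>0$ and set $k:=\lceil c(\eps)/\eps\rceil^2$, writing $c=c(\eps)$ for brevity. Since $W$ is $H$-small, there is a contractive linear map $T:H\to X$ with $W\subseteq cT(B(H))+\eps B(X)$, so any $w\in W$ can be written as $w=cT(h)+\eps b$ with $h\in B(H)$, $b\in B(X)$.

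Next I would apply Lemma \ref{lem:weak Szem} in the Hilbert space $H$ to the set $T^*(R)\cap B(H)$ — note $T^*$ is contractive so $T^*(R)\subseteq B(H)$ — after first replacing $R$ by its closure under multiplication by scalars of modulus at most $1$ (which changes neither $\|\cdot\|_R$ nor $TT^*(R)$ up to such scalars, and is exactly the hypothesis Lemma \ref{lem:weak Szem} requires). This gives $h\in B(H)\subseteq k\cdot(T^*(R))+\frac{1}{\sqrt k}B(H,\|\cdot\|_{T^*(R)})$, i.e. $h=\sum_{i=1}^l \alpha_i T^*(r_i)+h'$ with $l\le k$, $|\alpha_i|\le 1$, $r_i\in R$, and $\|h'\|_{T^*(R)}\le 1/\sqrt k$. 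Applying $T$ and multiplying by $c$: $w=c\sum_{i=1}^l\alpha_i\, TT^*(r_i)+cT(h')+\eps b$. Setting $x_i:=TT^*(r_i)\in TT^*(R)$, the claim reduces to bounding $\|cT(h')+\eps b\|_R$. For the first term, the key inequality is $T(B(H,\|\cdot\|_{T^*(R)}))\subseteq B(X,\|\cdot\|_R)$ — because for $r\in R$, $|r(T(h'))|=|(T^*r)(h')|\le\|h'\|_{T^*(R)}$ — so $\|cT(h')\|_R\le c/\sqrt k\le \eps$ by the choice of $k$. For the second term, $\|\eps b\|_R\le \eps\|b\|\cdot\sup_{r\in R}\|r\|\le\eps$ since $R\subseteq B(X^*)$ and $\|b\|\le 1$. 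Hence $\|w-c\sum_i\alpha_i x_i\|_R\le 2\eps$, as required.

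There is essentially no serious obstacle here; the statement is a repackaging of the chain of inclusions in the proof of Theorem \ref{thm:compact banach}, with the group $G$ and the compactness hypothesis on $k\cdot(TT^*(R))$ simply discarded. The only point needing mild care is the reduction to $R$ closed under unit-scalar multiplication: one must check this does not affect the conclusion's form, but since the $\alpha_i$ are allowed to range over all scalars of modulus at most $1$, absorbing such a scalar back into $\alpha_i$ recovers an expression with $x_i\in TT^*(R)$ for the original $R$. I would also remark that the dependence $k\le\lceil c(\eps)/\eps\rceil^2$ is exactly the ``weak regularity'' bound: the number of rank-one-type building blocks is controlled purely by the $H$-smallness function $c$ and the accuracy $\eps$, independently of $w$.
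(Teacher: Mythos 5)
Your proposal is correct and is exactly the argument the paper intends: Lemma~\ref{lem:weak banach} is stated as being implicit in the proof of Theorem~\ref{thm:compact banach}, and you have extracted precisely that chain (the $H$-smallness decomposition, Lemma~\ref{lem:weak Szem} applied to $T^*(R)$ after closing under unit scalars, the inclusion $T(B(H,\|\cdot\|_{T^*(R)}))\subseteq B(X,\|\cdot\|_R)$, and the two $\eps$-error terms). No gaps; the remark about absorbing unit scalars into the $\alpha_i$ correctly handles the only delicate point.
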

In \cite{LS7}, Lov\'asz and Szegedy applied the compactness of the graphon space, cf. \eqref{eq:compact LS}, to derive an approximation result for graphons, cf. \cite[Lemma 5.2]{LS7}. This result implies several types of regularity lemmas varying from  the weak regularity lemma \cite{FK99}, to the original lemma \cite{S76}, to a `super strong' variant \cite{AFKS00}. See \cite{L12} for more details.
We can derive something similar in our Banach space setting:
\begin{lemma}\label{lem:cor compact}
Let $(X,\|\cdot\|)$ be a Banach space, let $G\subseteq \aut(X)$, let $R\subseteq B(X^*)$ and let $W\subset X$ be $G$-stable and suppose that $(W,d_R/G)$ is compact.
Let for $k\in \N$, $Y_k\subset W$ be $G$-stable such that $Y:=\cup_{k\in \N} Y_k$ is dense in $W$ (w.r.t. $\|\cdot \|$). 
Let $h:(0,\infty)\times \N\to (0,1)$ be any function. 
Then for any $\eps>0$ there exists $N\in \N$ such that for any $w\in W$ there exists $w'\in W$ 
and $y\in Y_{m}$, with $m\leq N$, such that
\[
\|w-w'\|_R \leq h(\eps,m) \quad \text{ and }\quad  \|w'-y\| \leq \eps.
\]
\end{lemma}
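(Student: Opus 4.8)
The plan is to run a standard compactness-plus-density argument by contradiction, the same shape as the proof of Lemma 5.2 in \cite{LS7}. Suppose the statement fails for some $\eps>0$. Then for every $N\in\N$ there is a counterexample $w_N\in W$: for this $w_N$, there is \emph{no} pair $(w',y)$ with $w'\in W$, $y\in Y_m$ for some $m\le N$, satisfying $\|w_N-w'\|_R\le h(\eps,m)$ and $\|w'-y\|\le\eps$. Since $(W,d_R/G)$ is compact, after passing to a subsequence we may assume the images of the $w_N$ converge in $(W,d_R/G)$ to some $w_\infty\in W$, i.e.\ there are $g_N\in G$ with $\|g_Nw_N-w_\infty\|_R\to 0$. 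Because $Y=\bigcup_k Y_k$ is dense in $W$ with respect to $\|\cdot\|$, pick $y_0\in Y_{m_0}$ (some fixed $m_0$) with $\|w_\infty-y_0\|\le\eps$; note $\|\cdot\|_R\le\|\cdot\|$ since $R\subseteq B(X^*)$, so also $\|w_\infty-y_0\|_R\le\eps$.

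Next I would use $G$-stability to transport this approximation back to $w_N$. Set $w'_N:=g_N^{-1}w_\infty\in W$ (using that $W$ is $G$-stable) and $y_N:=g_N^{-1}y_0$; since $Y_{m_0}$ is $G$-stable, $y_N\in Y_{m_0}$, so $m=m_0\le N$ once $N\ge m_0$. Because $g_N\in\aut(X)$ preserves $d_R$, we get $\|w'_N-y_N\|_R=\|w_\infty-y_0\|_R\le\eps$ — but wait, the conclusion we need for $y_N$ is in the \emph{full} norm $\|w'-y\|\le\eps$, not $\|\cdot\|_R$. So I should instead choose $G$ to act by $\|\cdot\|$-isometries, or more carefully: the hypothesis only says $G\subseteq\aut(X)$ where $\aut$ is taken in the pseudometric $d_R$. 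This is the subtle point. The clean fix is to define $w':=g_N^{-1}w_\infty$ but then approximate \emph{$w'$ itself} by an element of $Y$ in the $\|\cdot\|$-norm directly — i.e.\ apply density to $w'_N$ rather than to $w_\infty$. Concretely: having fixed $w'_N:=g_N^{-1}w_\infty$, use density of $Y$ in $(W,\|\cdot\|)$ to find $y_N\in Y_{m_N}$ with $\|w'_N-y_N\|\le\eps$; the index $m_N$ is now not controlled by $N$. To fix the index, I would instead reverse the order: first pick $y_0\in Y_{m_0}$ approximating $w_\infty$ in $\|\cdot\|$, and then observe that for the conclusion we are allowed to pick \emph{any} $w'\in W$ with $\|w-w'\|_R$ small — so take $w':=g_N^{-1}y_0$ if $y_0\in W$ (it is, since $Y\subseteq W$). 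Then $y:=w'=g_N^{-1}y_0\in Y_{m_0}$, giving $\|w'-y\|=0\le\eps$ trivially, with $m=m_0\le N$. It remains to bound $\|w_N-w'\|_R=\|w_N-g_N^{-1}y_0\|_R=\|g_Nw_N-y_0\|_R\le\|g_Nw_N-w_\infty\|_R+\|w_\infty-y_0\|_R$.

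The final step is to make this last quantity at most $h(\eps,m_0)$. We have $\|w_\infty-y_0\|_R\le\|w_\infty-y_0\|$, which density lets us push below $h(\eps,m_0)/2$ — but note $m_0$ is chosen \emph{before} we know how small we need to go, which is fine because once $y_0\in Y_{m_0}$ is chosen the target $h(\eps,m_0)$ is a fixed positive number, so we should pick $y_0$ satisfying $\|w_\infty-y_0\|\le h(\eps,m_0)/2$ from the start (density gives approximants of arbitrarily small norm, each lying in \emph{some} $Y_k$; choose one, call its index $m_0$, and if its norm happens to exceed $h(\eps,m_0)/2$ just note $h(\eps,m_0)$ is positive and shrink — formally: enumerate a sequence $y^{(j)}\in Y_{k_j}$ with $\|w_\infty-y^{(j)}\|\to 0$, and pick $j$ with $\|w_\infty-y^{(j)}\|\le h(\eps,k_j)/2$, which exists since the left side $\to0$ while... hmm, $h(\eps,k_j)$ could also $\to 0$). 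This is the genuine obstacle: if $h(\eps,\cdot)$ decays faster than the density rate along the indices $k_j$, the selection fails. The resolution is that $W$ compact means we only need \emph{one} good $y$ per cluster point, and there are finitely many cluster points; I would cover $W$ by finitely many $d_R/G$-balls of a radius chosen \emph{after} fixing the finitely many approximants $y_0^{(1)},\dots,y_0^{(t)}\in Y_{m_1},\dots,Y_{m_t}$ and setting $N:=\max_i m_i$ and the ball radius to $\min_i h(\eps,m_i)/2$. Then for any $w\in W$, some $g\in G$ moves it within $\min_i h(\eps,m_i)/2$ in $\|\cdot\|_R$ of some $w_\infty^{(i)}$, take $w':=g^{-1}y_0^{(i)}=y$, and $\|w-w'\|_R\le \min_i h(\eps,m_i)/2+h(\eps,m_i)/2\le h(\eps,m_i)=h(\eps,m)$ with $m=m_i\le N$, while $\|w'-y\|=0\le\eps$. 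This direct (non-contradiction) version is cleaner; the main obstacle is exactly the order-of-quantifiers juggling with the arbitrary function $h$, resolved by choosing the covering radius only after the finite set of approximants is fixed.
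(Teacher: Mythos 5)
Your final ``direct'' argument contains a genuine gap, and it originates in collapsing the two objects $w'$ and $y$ of the conclusion into one. By setting $w'=y=g^{-1}y_0^{(i)}$ you force yourself to need an approximant $y_0^{(i)}\in Y_{m_i}$ with $\|w_\infty^{(i)}-y_0^{(i)}\|_R\leq h(\eps,m_i)/2$. As you yourself observe mid-proof, density only produces approximants whose index $k_j$ may grow as the approximation improves, while $h(\eps,k_j)$ may shrink faster than $\|w_\infty-y^{(j)}\|$; so such a $y_0^{(i)}$ need not exist even for a \emph{single} centre (e.g.\ if the best approximant in $Y_k$ sits at distance $1/k$ and $h(\eps,k)=1/k^{2}$). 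The finite cover does not repair this. Moreover your choice of covering radius $\min_i h(\eps,m_i)/2$ is circular: the indices $m_i$ are attached to the centres of the cover, which you can only produce after a radius has been fixed; iterating (cover, read off new $m_i$'s, shrink the radius, re-cover) need not terminate, because each re-covering may introduce centres with larger $f$-index and hence a smaller required radius.

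The statement is engineered precisely so that neither difficulty arises: $y$ only has to be $\eps$-close to $w'$ in the full norm, and $w'$ only has to be $h(\eps,m)$-close to $w$ in $\|\cdot\|_R$ --- they need not coincide. The paper's proof (following Lemma 5.2 of \cite{LS7}) assigns to each $w\in W$ the \emph{smallest} index $f(w)$ such that $Y_{f(w)}$ contains an $\eps$-approximant of $w$ in $\|\cdot\|$, and covers $W$ by the open sets $O(w)=\{w'\in W \mid (d_R/G)(w,w')<h(\eps,f(w))\}$, whose radii vary with their centres. Compactness --- not merely total boundedness --- extracts a finite subcover $O(w_1),\dots,O(w_t)$; one then sets $N=\max_i f(w_i)$, takes $w'=gw_i$ to be the transported centre and $y=gy_i$ the transported $\eps$-approximant of $w_i$. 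This breaks both circles at once: the radius of each ball is determined by its own centre before compactness is invoked, and the approximant is only ever required at the fixed scale $\eps$ in the full norm, never at scale $h(\eps,m)$ in $\|\cdot\|_R$. Your opening contradiction/subsequence setup could be repaired along the same lines, but only once you abandon the requirement that $y$ itself be $h(\eps,m)$-close to $w$ in $\|\cdot\|_R$.
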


The proof goes along the same lines as the proof of Lemma 5.2 in \cite{LS7}.
\begin{proof}
We may assume that $h$ is monotonically decreasing in its second variable. 
Let $\eps>0$.
Since $Y$ is dense in $W$, for each $w\in W$, there exists $n\in \N$ and $y\in Y_n$ such that $\|w-y\|\leq \eps$. 
Let $f(w)$ denote the smallest $n$ such that there exists $y\in Y_n$ with $\|w-y\|\leq \eps$.
For $w\in W$ let $O(w)$ denote the $d_R/G$-open set defined as 
\[
O(w):=\{w'\in W\mid (d_{R}/G)(w,w')< h(\eps,f(w))\}.
\]
By compactness of $(W,d_R/G)$, it follows that there exists $w_1,\ldots,w_t$ such that the union of the $O(w_i)$ contains $W$.
This means that for any $w\in W$ there exists $i\in \{1,\ldots,t\}$ and $y\in Y_m$, with $m= f(w_i)$, such that $\|w_i-y\|\leq \eps$ and $(d_R/G)(w,w_i)< h(\eps,f(w_i))$.
Choosing $N:=\max_{i=1,\ldots,t} f(w_i)$ and a suitable $g\in G$ applied to both $w_i$ and $y$ we arrive at the desired conclusion.
\end{proof}

Since the norm $\|\cdot\|$ on $X$ satisfies $\|x\|\leq \|x\|_R$ for each $x\in X$, taking $h(\eps,m)=\eps$ for all $m$, Lemma \ref{lem:cor compact} implies several types of weak regularity lemmas by taking different choices of
$Y_k$. 
In particular, it allows to use different types of approximations not based on sums of elements from $\{z\in \C\mid |z|\leq 1\}TT^*(R)$ as in Lemma \ref{lem:weak banach}.
We will elaborate a bit more on this in the next two sections, where we will apply the results obtained here to the spaces $\ell^p(\N^l)$ and $L^p([0,1]^l)$.

\section{Compact orbit spaces in $L^s(([0,1]^l)$}\label{sec:L}
In this section we will apply Theorems \ref{thm:compact banach v1} and \ref{thm:compact banach} to the Banach space $X=L^s([0,1]^l)$ for some fixed $s\in [1,\infty)$ and $l\in \N$.
Let for any $q\in [1,\infty]$, 
\begin{align}
&R^q_l:=\{r_1\otimes \ldots\otimes r_l \mid r_i\in B(L^q([0,1]))\}\subset B(L^q([0,1]^l)).	\nonumber
\end{align}
Recall that $S_{[0,1]}$ denotes the group of measure preserving bijections $\tau:[0,1]\to [0,1]$.
We will derive the following result from Theorems \ref{thm:compact banach v1} and \ref{thm:compact banach}, generalising Corollary \ref{cor:BCCZ}, the compactness result of Borgs, Chayes, Cohn and Zhao \cite{BCCZ14}.

\begin{theorem}\label{thm:compact orbit spaces L}
Let $p\in (1,\infty]$ and let $q$ be such that $q>p^*$.
Fix $l \in \N$. Then the space $(B(L^{p}([0,1]^{l})),d_{R^q_l})/S_{[0,1]}$ is compact. 
\end{theorem}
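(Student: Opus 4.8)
The plan is to apply Theorem \ref{thm:compact banach} with $X$ chosen to be either $L^2([0,1]^l)$ or $L^1([0,1]^l)$ (whichever makes $B(L^p)$ a subset: take $X=L^2$ when $p\geq 2$, and $X=L^1$ when $1<p<2$), with $H=L^2([0,1]^l)$, with $W=B(L^p([0,1]^l))$, with $G=S_{[0,1]}$ acting diagonally on $[0,1]^l$, and with $R=R^q_l$. First I would check the hypotheses of Theorem \ref{thm:compact banach} one by one. The set $R^q_l\subset B(L^q)\subseteq B(X^*)$: here one uses $q\geq p^*$ (so that $L^q\subseteq L^{p^*}=(L^p)^*$ via the nesting \eqref{eq:nesting Lp}, since $[0,1]^l$ is a probability space) together with $p^*\geq 2^*=2$ or the appropriate bound so that $R^q_l$ lands in $B(X^*)$; and $R^q_l$ is $S_{[0,1]}$-stable since a diagonal measure preserving bijection permutes tensor products of $L^q$-balls. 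The set $W=B(L^p)$ is $S_{[0,1]}$-stable (measure preserving bijections preserve the $p$-norm) and weakly compact in the $w_\infty$ topology: for $p<\infty$ this follows from Banach--Alaoglu applied to $(B(L^p),w_{p^*})$ and the fact that $w_\infty$ is weaker; for $p=\infty$ one argues as in the proof of Corollary \ref{cor:BCCZ}, using that $B(L^\infty)$ is a norm-closed convex subset of $B(L^s)$ for some fixed $s>1$.

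Next I would verify $H$-smallness of $W=B(L^p)$. With $T:H\to X$ the identity map, $H$-smallness amounts to showing that $B(L^p)$ can be covered, for each $\eps>0$, by $c(\eps)B(L^2)+\eps B(X)$. This is exactly the content of the lemma referred to in the text as Lemma \ref{lem:bound1L} (the same bound used in the proof of Corollary \ref{cor:BCCZ}); the idea is the standard truncation argument --- given $w$ with $\|w\|_p\leq 1$, split $w=w\mathbf 1_{|w|\leq M}+w\mathbf 1_{|w|>M}$, bound the first piece in $L^2$ (or in the norm of $X$) by something depending on $M$, and bound the second piece in the $X$-norm by a quantity tending to $0$ as $M\to\infty$, using $p>1$ for the $L^1$ case and $p\geq 2$ for the $L^2$ case. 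Since $T$ is the identity, $TT^*$ restricted to $R^q_l$ is the inclusion $R^q_l\hookrightarrow X$, so the remaining hypothesis is that $(k\cdot R^q_l,\, d_{R^q_l}/S_{[0,1]})$ is compact for each $k\in\N$.

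The main obstacle is precisely this last point: showing $(k\cdot R^q_l, d_{R^q_l}/S_{[0,1]})$ is compact. For $l=1$ this is the argument already sketched in Corollary \ref{cor:BCCZ} --- any measurable set is carried to an interval of its measure by a measure preserving bijection, so modulo $S_{[0,1]}$ one can normalise each $r_i\in B(L^q([0,1]))$ to a monotone rearrangement, reducing $(R^q_1,d_{R^q_1}/S_{[0,1]})$ to a compact subset of rearrangements (using weak compactness of $B(L^q)$ for $q>1$, or the $s>1$ trick when $q=\infty$), and then $k\cdot R^q_1$ is a continuous image and hence compact. For general $l$ one has to handle tensor products $r_1\otimes\cdots\otimes r_l$ under the \emph{diagonal} action: here I expect the key step to be that $d_{R^q_l}/S_{[0,1]}$ on $R^q_l$ is controlled by the product (or maximum) of the one-dimensional cut-type distances on the individual factors, so that $(R^q_l,d_{R^q_l}/S_{[0,1]})$ embeds continuously into a finite product of copies of the compact space from the $l=1$ case; compactness of $R^q_l$ then follows, and that of $k\cdot R^q_l$ again by continuity of the sum map (as in \cite{RS12}). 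Care is needed because the same bijection $\tau$ must be applied simultaneously in all $l$ coordinates, but since one can reduce each factor to a canonical monotone representative by a \emph{single} bijection acting on $[0,1]$, the diagonal constraint is not an obstruction. Once these are in place, Theorem \ref{thm:compact banach} yields that $(B(L^p([0,1]^l)),d_{R^q_l}/S_{[0,1]})$ is compact, which is the claim.
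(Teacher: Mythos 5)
Your high-level plan (weak compactness via Banach--Alaoglu plus a truncation step in the spirit of Lemma \ref{lem:bound1L}) matches the paper's, but the route through Theorem \ref{thm:compact banach} with $R=R^q_l$ has two genuine gaps. First, the hypothesis $R\subseteq B(X^*)$ fails for your choice of ambient space: with $X=L^1$ you have $X^*=L^\infty$, and a tensor $r_1\otimes\cdots\otimes r_l$ with $r_i\in B(L^q([0,1]))$ is not essentially bounded when $q<\infty$; with $X=L^2$ you need $q\geq 2$, which is not implied by $q>p^*$ (e.g.\ $p=4$, $q=3/2$). The natural repair is $X=L^{q^*}$ (this is what the paper takes), but then $H$-smallness with $H=L^2$ and $T$ the identity breaks whenever $q<2$, since $L^2\not\subseteq L^{q^*}$ when $q^*>2$. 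Second, and more seriously, your argument for compactness of $(k\cdot R^q_l,\,d_{R^q_l}/S_{[0,1]})$ does not work. A single measure preserving bijection of $[0,1]$ cannot simultaneously carry $l$ different functions (or even two different measurable sets) to monotone rearrangements: if $r_1$ is a.e.\ injective, the $\tau$ making $r_1\circ\tau^{-1}$ monotone is essentially unique and then $r_2\circ\tau^{-1}$ is whatever it is. What is preserved under the diagonal action is the \emph{joint} distribution of $(r_1,\ldots,r_l)$, and for $q<\infty$ the $r_i$ are arbitrary elements of $B(L^q)$, so the ``any measurable set maps to an interval'' fact from \cite{N98} does not apply; establishing compactness of this joint-distribution space and relating it to $d_{R^q_l}/S_{[0,1]}$ is essentially as hard as the theorem itself.

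The paper avoids both problems by not invoking Theorem \ref{thm:compact banach} at all for this result. It works in $X=L^{q^*}([0,1]^l)$ and applies Theorem \ref{thm:compact banach v1}, proving total boundedness of $(B(L^p),d_{R^q_l}/S_{[0,1]})$ directly in three steps: (i) the norms $\|\cdot\|_{R^\infty_l}$ and $\|\cdot\|_{R}$, where $R$ consists of products of indicator functions, are equivalent up to a factor $4^l$, so compactness of $(B(L^\infty),d_{R^\infty_l}/S_{[0,1]})$ follows from (the proof of) Corollary \ref{cor:BCCZ} --- this is where the \cite{N98}/\cite{RS12} rearrangement argument legitimately enters, applied only to indicators; (ii) Lemma \ref{lem:bound 2}, an inductive tensor-product decomposition $R^q_l\subseteq CR^\infty_l+\eps R^1_l$, upgrades this to compactness of $(B(L^\infty),d_{R^q_l}/S_{[0,1]})$; (iii) the truncation Lemma \ref{lem:bound1L}, $B(L^p)\subseteq KB(L^\infty)+\eps B(L^{q^*})$, transfers total boundedness from $B(L^\infty)$ to $B(L^p)$. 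Lemma \ref{lem:bound 2} is the ingredient your proposal is missing: it is what lets one replace the intractable set $R^q_l$ by products of indicators, for which the group action can actually be controlled.
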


We will prove Theorem \ref{thm:compact orbit spaces L} in Section \ref{sec:proof L} below. Let us first make some remarks and state some consequences.

Borg, Chayes, Cohn and Zhao \cite{BCCZ14}, who proved Theorem \ref{thm:compact orbit spaces L} for $l=2$ and $q=\infty$, already noted that the same result cannot be true when $p=1$. We note here that in case $p=2$ we really need that $q>2$ since $B(L^2([0,1]^2),d_{R^2_2})/S_{[0,1]}$ is not totally bounded. 
To see this take 
$f_i$ to be the function which is constant $i$ on the rectangle $[0,1/i]\times [0,1/i]$.
Since for rank $2$ functions $f$ we have $\|f\|_{R^2_2}\leq \|f\|_2\leq 2\|f\|_{R^2_2}$, we may as well replace $d_{R^2_2}$ by $d_2$.
Then for any $\pi\in S_{[0,1]}$ we have for $i=2^k<j=2^l$ with $l\geq k+2$,
\[
\int_{[0,1]^2}|f_i-\pi f_j|^2\geq \int_{[0,1]^2}|f_i-f_j|^2\geq \frac{(2^k-2^l)^2}{2^l}\geq 1-2^{k+1-l}\geq 1/2,
\]
implying that we need to take $q>2$ when $p=2$. This example of course generalises to any $l>2$ and $p,q$ such that $q=p^*$.

Janson \cite{J13} showed, using the Riesz-Thorin theorem, that in case $p=\infty$ (and $l=2$), Theorem \ref{thm:compact orbit spaces L} remains true if one replaces the $\|\cdot\|_{R^q_2}$ norm by the $p_0\mapsto q_0$ operator norm for any $p_0,q_0>1$. See \cite[Lemma E.6]{J13}.
We can also use the Riesz-Thorin theorem in our setting for $l=2$. (We need to work over the complex numbers though. This does not cause any problems for real-valued functions, as the complex and real operator norms are within a constant factor of each other.)
Let us take $p>1$ and $q>p^*$. 
Taking $f\in B(L^p([0,1]^2))$, we then have $\|f\|_{p^*\mapsto p}\leq 1$.
Define for $\theta \in (0,1)$ $p_{\theta}$ and $q_{\theta}$ by
\begin{equation}
\frac{1}{p_{\theta}}:=\frac{1-\theta}{p^*}+\frac{\theta}{q}\quad \text{ and } \quad \frac{1}{q_{\theta}}:=\frac{1-\theta}{p}+\frac{\theta}{q^*}.	\label{eq:define theta}
\end{equation}
Then by the Riesz-Thorin theorem (cf. \cite[Theorem 1.1.1]{BL76}) $\|f\|_{p_\theta\mapsto q_\theta}\leq \|f\|_{q\mapsto q^*}^\theta=\|f\|_{R^q_l}^\theta$. 
By Theorem \ref{thm:compact orbit spaces L} this implies the following:
\begin{cor}\label{cor:Riesz L}
Let $p,q\in (1,\infty]$ such that $q>p^*$. Let $\theta\in (0,1)$ and let $p_\theta$ and $q_\theta$ be defined as in \eqref{eq:define theta}.
Then $(B(L^p([0,1]^2)),\|\cdot\|_{p_\theta\mapsto q_\theta})/S_{[0,1]}$ is compact.
\end{cor}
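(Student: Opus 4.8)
The plan is to deduce the corollary from Theorem~\ref{thm:compact orbit spaces L} (with $l=2$) via the soft fact that a continuous image of a compact space is compact; the analytic input---an interpolation estimate---is essentially the one already stated just before the corollary. Write $d_q:=d_{R^q_2}$ for the pseudometric on $B(L^p([0,1]^2))$ attached to $R^q_2$, so that $(B(L^p([0,1]^2)),d_q/S_{[0,1]})$ is compact by Theorem~\ref{thm:compact orbit spaces L}, and write $d_\theta$ for the metric induced by the norm $\|\cdot\|_{p_\theta\mapsto q_\theta}$. Both $d_q$ and $d_\theta$ are $S_{[0,1]}$-invariant: the diagonal action of a measure preserving bijection of $[0,1]$ conjugates a kernel operator into another kernel operator and acts isometrically on every $L^s([0,1])$, hence preserves all the operator norms in play, so $S_{[0,1]}$ is a subgroup of the automorphism group of each of these pseudometric spaces.

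First I would record the two endpoint estimates for a kernel $f\in L^p([0,1]^2)$. By H\"older's inequality applied to the defining supremum, $\|f\|_{p^*\mapsto p}\le\|f\|_p$ for every $f$; and unwinding the definition of the $q\mapsto q^*$ operator norm, using $(L^{q^*})^*=L^q$ (valid since $q>1$) and that $B(L^q([0,1]))$ is closed under complex conjugation, one gets $\|f\|_{q\mapsto q^*}=\sup_{r_1,r_2\in B(L^q([0,1]))}|\langle f,r_1\otimes r_2\rangle|=\|f\|_{R^q_2}$. Since $q>p^*$ forces $B(L^q)\subseteq B(L^{p^*})$, both quantities are finite for $f\in L^p$. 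The Riesz--Thorin theorem (cf.\ \cite[Theorem~1.1.1]{BL76}), applied to the kernel operator with kernel $f$ at the two endpoints $L^{p^*}\to L^p$ and $L^q\to L^{q^*}$, then yields for every $\theta\in(0,1)$
\[
\|f\|_{p_\theta\mapsto q_\theta}\ \le\ \|f\|_{p^*\mapsto p}^{\,1-\theta}\,\|f\|_{q\mapsto q^*}^{\,\theta}\ \le\ \|f\|_p^{\,1-\theta}\,\|f\|_{R^q_2}^{\,\theta}.
\]
(This uses Riesz--Thorin over $\C$; for real kernels the real and complex operator norms differ only by a universal constant, which does not affect the argument.)

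Next I would apply this with $f=w-\tau w'$ for arbitrary $w,w'\in B(L^p([0,1]^2))$ and $\tau\in S_{[0,1]}$. Since $\|w-\tau w'\|_p\le 2$, this gives $\|w-\tau w'\|_{p_\theta\mapsto q_\theta}\le 2^{1-\theta}\|w-\tau w'\|_{R^q_2}^{\,\theta}$. Taking the infimum over $\tau$ and using that $t\mapsto 2^{1-\theta}t^{\theta}$ is continuous and increasing (so the infimum passes inside), we obtain
\[
(d_\theta/S_{[0,1]})(w,w')\ \le\ 2^{1-\theta}\big((d_q/S_{[0,1]})(w,w')\big)^{\theta}\qquad\text{for all }w,w'\in B(L^p([0,1]^2)).
\]
This says precisely that the identity map $(B(L^p([0,1]^2)),d_q/S_{[0,1]})\to(B(L^p([0,1]^2)),d_\theta/S_{[0,1]})$ is (uniformly) continuous; it is also surjective, so it transports the compactness furnished by Theorem~\ref{thm:compact orbit spaces L} to the target, which is the assertion of the corollary.

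I do not anticipate a genuine obstacle here: the hard analytic content (Riesz--Thorin and the compactness of the $R^q_2$-orbit space) is already in hand, and what remains is the ``continuous image of compact'' principle. The only points requiring a little care are the bookkeeping of the constant $2$ arising because $w-\tau w'\notin B(L^p)$, the verification of the endpoint identity $\|f\|_{q\mapsto q^*}=\|f\|_{R^q_2}$, and the check that the Riesz--Thorin constant survives passing to the infimum over $\tau\in S_{[0,1]}$ (which it does, by monotonicity of $t\mapsto 2^{1-\theta}t^{\theta}$).
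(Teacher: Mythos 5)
Your proposal is correct and follows essentially the same route as the paper: the Riesz--Thorin interpolation between the endpoints $L^{p^*}\to L^p$ and $L^q\to L^{q^*}$ gives $\|f\|_{p_\theta\mapsto q_\theta}\le \|f\|_{p^*\mapsto p}^{1-\theta}\|f\|_{R^q_2}^{\theta}$, and compactness is then transported from Theorem \ref{thm:compact orbit spaces L} by continuity of the identity map on orbit spaces. Your extra bookkeeping (the factor $2^{1-\theta}$ for differences $w-\tau w'$ and the passage to the infimum over $\tau$) only makes explicit what the paper leaves implicit.
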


By Lemma \ref{lem:cor compact} the following is a direct corollary to Theorem \ref{thm:compact orbit spaces L} and the previous corollary.

\begin{cor}\label{cor:cor compact L}
Let $p,q\in (1,\infty]$ such that $q>p^*$. Fix $l\in \N$.
Let for $k\in \N$, $F_k \subset B(L^p([0,1]^l))$ be $S_{[0,1]}$-stable such that $F:=\cup_{k\in \N} F_k$ is a dense subset of $B(L^p([0,1]^l))$ (w.r.t. $(\|\cdot\|_{q^*}$).
Let $h:(0,\infty)\times \N\to (0,1)$ be any function. 
Then for any $\eps>0$ there exists $N\in \N$ such that for any $g\in B(L^p([0,1]^l))$ there exists $g'\in B(L^p([0,1]^l))$ 
and $f\in F_{m}$, with $m\leq N$, such that
\begin{equation}
\|g-g'\|_{R^q_l}\leq h(\eps,m)\quad \text{ and }\quad \|g'-f\|_{q^*}\leq \eps .	\label{eq:strong}
\end{equation}
In case $l=2$ and $p_\theta$ and $q_\theta$ are defined by \eqref{eq:define theta}, the $R^q_l$ norm may be replaced by $\|\cdot\|_{p_\theta\mapsto q_\theta}$ in \eqref{eq:strong}.
\end{cor}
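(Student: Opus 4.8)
The plan is to deduce Corollary \ref{cor:cor compact L} directly from Lemma \ref{lem:cor compact}, exactly as the paper announces, by verifying that the hypotheses of that lemma are met in the present setting. First I would set $X = L^s([0,1]^l)$ for the appropriate $s$ (namely $s=2$ if $p\geq 2$ and $s=1$ otherwise, matching the choice in the proof of Theorem \ref{thm:compact orbit spaces L}), take $G = S_{[0,1]}$ acting by measure-preserving bijections, take $R = R^q_l \subseteq B((L^s)^*)$ — which is legitimate because $q > p^* \geq s^*$ so that $R^q_l$ lands in the unit ball of the dual by the nesting \eqref{eq:nesting Lp} — and take $W = B(L^p([0,1]^l))$. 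With these choices Theorem \ref{thm:compact orbit spaces L} gives precisely that $(W, d_{R^q_l}/S_{[0,1]})$ is compact, so the standing hypothesis of Lemma \ref{lem:cor compact} holds.

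Next I would check the remaining hypotheses: each $F_k$ is assumed $S_{[0,1]}$-stable, and $F = \bigcup_k F_k$ is assumed dense in $W$ with respect to $\|\cdot\|_{q^*}$. Here one must be slightly careful about which norm "dense" refers to: Lemma \ref{lem:cor compact} asks for density with respect to the norm $\|\cdot\|$ of the Banach space $X$, and the conclusion $\|w'-y\|\leq \eps$ is in that same norm. So I would invoke the lemma with the space $X = L^{q^*}([0,1]^l)$ rather than $L^s$ — this is permissible since $W = B(L^p) \subseteq B(L^{q^*})$ by \eqref{eq:nesting Lp} (as $q > p^*$ forces $q^* < p$), $W$ is $S_{[0,1]}$-stable, $R^q_l \subseteq B(L^q) = B((L^{q^*})^*)$, and compactness of $(W, d_{R^q_l}/S_{[0,1]})$ is a statement about $W$ with this metric that does not depend on the ambient Banach space. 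Then the $\|\cdot\|$ of Lemma \ref{lem:cor compact} is exactly $\|\cdot\|_{q^*}$, the density hypothesis matches verbatim, and applying the lemma with the given $h$ yields the desired $N$, $w'=g'$, $m\leq N$ and $f\in F_m$ with $\|g-g'\|_{R^q_l}\leq h(\eps,m)$ and $\|g'-f\|_{q^*}\leq\eps$.

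For the final sentence, the $l=2$ case, I would simply substitute the Riesz--Thorin estimate $\|x\|_{p_\theta\mapsto q_\theta}\leq \|x\|_{R^q_2}^\theta$ established just before Corollary \ref{cor:Riesz L}. Since $h(\eps,m)\in(0,1)$ and $\theta\in(0,1)$, from $\|g-g'\|_{R^q_2}\leq h(\eps,m)$ we get $\|g-g'\|_{p_\theta\mapsto q_\theta}\leq h(\eps,m)^\theta$; as $h$ is an arbitrary function into $(0,1)$ and the statement is "for any function $h$", replacing $h$ by $h^{1/\theta}$ (still a function into $(0,1)$) at the outset shows the conclusion holds with $\|\cdot\|_{p_\theta\mapsto q_\theta}$ in place of $\|\cdot\|_{R^q_2}$ and the original $h$. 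Alternatively one just notes $h(\eps,m)^\theta$ is again an admissible bound. The only genuine point requiring care — the main (minor) obstacle — is the bookkeeping of which $L^s$-space plays the role of $X$ at each stage and checking that $R^q_l$ legitimately sits in the relevant dual unit ball; the compactness input and the group-action formalism are black-boxed from Theorem \ref{thm:compact orbit spaces L} and Lemma \ref{lem:cor compact}, so no new analytic work is needed.
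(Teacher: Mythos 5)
Your proof is correct and follows essentially the route the paper intends: apply Lemma \ref{lem:cor compact} with $X=L^{q^*}([0,1]^l)$, $W=B(L^p([0,1]^l))$, $R=R^q_l$ and $G=S_{[0,1]}$, the compactness hypothesis being supplied by Theorem \ref{thm:compact orbit spaces L} (and by Corollary \ref{cor:Riesz L}, equivalently your Riesz--Thorin post-processing, for the $l=2$ variant). The only nit is in that last step: the estimate $\|f\|_{p_\theta\mapsto q_\theta}\leq\|f\|_{R^q_2}^{\theta}$ is derived for $f\in B(L^p)$ (where $\|f\|_{p^*\mapsto p}\leq 1$), whereas $g-g'$ only lies in $2B(L^p)$, so a harmless factor $2^{1-\theta}$ appears; it is absorbed by replacing $h$ with $(h/2)^{1/\theta}$ rather than $h^{1/\theta}$.
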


There are various choices for the sets $F_k$ in the corollary above. For example one can take $F_k$ to be those function that are sums of at most $k$ rectangles (a \emph{rectangle} is a function of the form $c\chi_{A_1\times \cdots \times A_l}$ with $A_i$ measurable and $c$ a constant.) 
A special case is to take $F_k$ to be those sums of rectangles that are coming from a partition of $[0,1]$ into at most $k$ sets.
 Another choice would be to take $F_k$ to be those functions that take only $k$ different values (i.e. step functions with $k$ steps).

\subsection{Proof of Theorem \ref{thm:compact orbit spaces L}} \label{sec:proof L}
To prove Theorem \ref{thm:compact orbit spaces L}, we need some preliminary results.

The next lemma implies that $B(L^p([0,1]^l))\subset L^1([0,1]^l)$ is $H$-small for $H=L^2([0,1]^l)$ and any $p\in(1,2]$. 

\begin{lemma}\label{lem:bound1L}
Let $(\Omega,\mu)$ be any probability space.
Let $p>p'\geq1$ and $\varepsilon >0$.
Then there exists a constant $C$ such that $B(L^p(\Omega))\subseteq CB(L^{\infty}(\Omega))+ \varepsilon B(L^{p'}(\Omega))$.
\end{lemma}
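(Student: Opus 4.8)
The plan is to split an arbitrary $f \in B(L^p(\Omega))$ into its "large" and "small" parts via truncation at a threshold $C$, and to control the contribution of the large part in the $L^{p'}$-norm using the $p$-norm bound together with Hölder's inequality (or Chebyshev). First I would fix $f$ with $\|f\|_p \le 1$ and, for a threshold $C>0$ to be chosen, write $f = f_1 + f_2$ where $f_1 := f \cdot \chi_{\{|f| \le C\}}$ and $f_2 := f \cdot \chi_{\{|f| > C\}}$. Clearly $\|f_1\|_\infty \le C$, so $f_1 \in C\,B(L^\infty(\Omega))$; it remains to show that $\|f_2\|_{p'} \le \varepsilon$ for $C$ large enough depending only on $p, p', \varepsilon$ (and not on $f$).

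For the tail estimate, let $E := \{|f| > C\}$. By Chebyshev's inequality $\mu(E) \le C^{-p}\|f\|_p^p \le C^{-p}$. Then I would apply Hölder's inequality on $E$ with exponents $p/p' > 1$ and its conjugate:
\[
\|f_2\|_{p'}^{p'} = \int_E |f|^{p'}\, d\mu \le \left(\int_E |f|^{p}\, d\mu\right)^{p'/p} \mu(E)^{1 - p'/p} \le \|f\|_p^{p'} \cdot \mu(E)^{1-p'/p} \le C^{-p(1 - p'/p)} = C^{-(p-p')}.
\]
Since $p > p'$, the exponent $p - p' > 0$, so choosing $C := \varepsilon^{-1/(p-p')}$ (or anything larger) gives $\|f_2\|_{p'} \le \varepsilon$. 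This $C$ depends only on $p, p', \varepsilon$, as required, and the probability normalization $\mu(\Omega) = 1$ is what makes the nesting of unit balls and the use of $\|f\|_p \le 1$ legitimate throughout.

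I do not anticipate a serious obstacle here — the only mild point of care is making sure every estimate is genuinely uniform over $f \in B(L^p)$, which is automatic since the only quantity that enters is $\|f\|_p \le 1$. One should also note the degenerate endpoint $p = \infty$: if $p=\infty$ the statement is trivial (take $C=1$, $\varepsilon$-part empty), and for $p=p'$ it is excluded by hypothesis, so the argument above covers all remaining cases $1 \le p' < p < \infty$. Finally, to connect with the application, taking $p' = 1$ shows $B(L^p(\Omega)) \subseteq C(\varepsilon) B(L^\infty) + \varepsilon B(L^1)$, and since on a probability space $B(L^\infty) \subseteq B(L^2)$, one gets $B(L^p) \subseteq C(\varepsilon) B(L^2) + \varepsilon B(L^1)$, i.e. $B(L^p)$ is $L^2$-small inside $L^1$ for $p \in (1,2]$ (with $T$ the inclusion $L^2 \hookrightarrow L^1$); the case $p \ge 2$ is handled by taking $X = L^2$ directly, where no truncation is needed at all.
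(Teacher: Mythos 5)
Your proof is correct and takes essentially the same route as the paper's: the identical truncation $f=f\chi_{\{|f|\le C\}}+f\chi_{\{|f|>C\}}$, with the tail bounded by $C^{-(p-p')}$ --- the paper gets this from the pointwise inequality $|f|^{p'}\le |f|^{p'}(|f|/C)^{p-p'}$ on $\{|f|>C\}$ rather than Chebyshev plus H\"older, but the estimate is the same. One bookkeeping nit: your choice $C=\varepsilon^{-1/(p-p')}$ yields $\|f_2\|_{p'}\le\varepsilon^{1/p'}$ rather than $\varepsilon$; taking $C=\varepsilon^{-p'/(p-p')}$ (covered by your ``or anything larger'') closes this.
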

\begin{proof}
Define for convenience $B^s:=B(L^s(\Omega))$, for any $s\in [1,\infty]$.
Fix any $K\geq1$ and let $f\in B^p$. Let $A:=\{\omega\in\Omega\mid |f(\omega)|>K\}$.
Then $f=f_1+f_2$ with $f_1=f\chi_{\Omega\setminus A}$ and $f_2=f\chi_{A}$. 
Clearly, $f_1\in KB^{\infty}$.
Next we consider $\|f_2\|_{p'}$:
\[
\|f_2\|^{p'}_{p'}=\int_\Omega |f_2|^{p'}d\mu \leq \int_\Omega |f_2|^{p'} (|f|/K)^{p-p'}d\mu\leq K^{p'-p}\int_\Omega |f|^pd\mu\leq K^{p'-p}.
\]
Hence choosing $C$ in such a way that $C^{1-p/p'}=\varepsilon$ we obtain the desired result.
\end{proof}

\begin{lemma}\label{lem:bound 2}
Let $p>s\geq1$ and let $\varepsilon>0$. 
Then for any $k\in \N$ there exists a constant $C$ such that $R^p_k\subseteq CR^\infty_k+\varepsilon R^s_k$.
\end{lemma}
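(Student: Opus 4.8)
The plan is to reduce the tensor statement to the one-dimensional case, which is essentially Lemma \ref{lem:bound1L} applied on $[0,1]$, and then to push the splitting through the tensor product one factor at a time by a telescoping argument. First I would fix $p>s\geq 1$ and $\varepsilon>0$, and note that by Lemma \ref{lem:bound1L} (with $\Omega=[0,1]$, $p'=s$) there is a constant $D=D(p,s,\delta)$ such that $B(L^p([0,1]))\subseteq D\,B(L^\infty([0,1]))+\delta\,B(L^s([0,1]))$, where $\delta\in(0,1)$ is a parameter to be chosen at the end in terms of $\varepsilon$ and $k$. Without loss of generality $D\geq 1$ and every element $r\in B(L^p([0,1]))$ can thus be written as $r=u+v$ with $\|u\|_\infty\le D$ and $\|v\|_s\le\delta$.

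The core step is the telescoping decomposition of a rank-one tensor $r_1\otimes\cdots\otimes r_k\in R^p_k$. Writing $r_i=u_i+v_i$ as above, expand the product
\[
r_1\otimes\cdots\otimes r_k=\sum_{S\subseteq[k]}\Big(\bigotimes_{i\in S}v_i\Big)\otimes\Big(\bigotimes_{i\notin S}u_i\Big),
\]
and group the term $S=\varnothing$ (which is $u_1\otimes\cdots\otimes u_k$, a scalar of size at most $D^k$ times an element of $R^\infty_k$) separately from all terms with $S\neq\varnothing$. For a term with $S\neq\varnothing$, pick one index $i_0\in S$; then $\bigotimes_{i\in S}v_i\otimes\bigotimes_{i\notin S}u_i$ has the factor $v_{i_0}$ with $\|v_{i_0}\|_s\le\delta$, while all other factors $v_i$ ($i\in S\setminus\{i_0\}$) satisfy $\|v_i\|_s\le\delta\le 1$ hence $\|v_i\|_1\le 1$ — but to land in $R^s_k$ I actually want each remaining factor bounded in $L^\infty$, so instead I would bound $\|v_i\|_\infty$: this is the point that needs care, since $v_i\in\delta B(L^s)$ does not control $\|v_i\|_\infty$. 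The clean fix is to apply Lemma \ref{lem:bound1L} a second time to the already-small piece, i.e. use the decomposition at level $L^\infty$ versus $L^s$ differently: decompose $r_i=u_i+v_i$ with $\|u_i\|_\infty\le D$, $\|v_i\|_s\le\delta$, and for each nonempty $S$ and chosen $i_0\in S$ rewrite the bad factors $v_i$, $i\in S\setminus\{i_0\}$, simply as $r_i-u_i$, so that $\|v_i\|_p\le 1+ D\le 2D$; then $\bigotimes_{i\in S}v_i\otimes\bigotimes_{i\notin S}u_i$ is $(2D)^{|S|-1}D^{k-|S|}$ times a rank-one tensor whose $i_0$-th factor has $L^s$-norm $\le\delta$ and whose other factors have $L^p$-norm $\le 1$; absorbing the $i_0$-th factor's $\delta$ and viewing the rest in $L^p$ shows this term lies in $(2D)^{k}\delta\cdot\big(B(L^s([0,1]))\otimes R^p_{k-1}\big)$. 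Since $R^p_{k-1}\subset R^s_{k-1}$ by the nesting \eqref{eq:nesting Lp} (all on a probability space), and $B(L^s)\otimes R^s_{k-1}\subseteq R^s_k$, each such term lies in $(2D)^k\delta\,R^s_k$. Summing the $2^k-1$ nonempty-$S$ terms gives a total contribution in $2^k(2D)^k\delta\,R^s_k$ (using that $R^s_k$ is closed under $\{|z|\le 1\}$-scaling and that a sum of $m$ elements of $cR^s_k$ is absorbed after rescaling, or more carefully one keeps the constant $c$ and sets $\varepsilon$ accordingly).

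Finally I would set the constants: choose $\delta:=\varepsilon\,(2^k(2D)^k)^{-1}$ — but note $D$ itself depends on $\delta$ through Lemma \ref{lem:bound1L}, so this must be done as a fixed point or, more simply, by first fixing $\delta$ small, reading off $D=D(p,s,\delta)$, and then checking $2^k(2D)^k\delta\le\varepsilon$ holds once $\delta$ is small enough; since in Lemma \ref{lem:bound1L} the constant grows only polynomially in $\delta^{-1}$ (indeed $C^{1-p/p'}=\varepsilon$, so $D=\delta^{-p'/(p-p')}$ up to constants, hence $2^k(2D)^k\delta=O(\delta^{1-kp'/(p-p')})$ — wait, this tends to $0$ only if the exponent is positive, which fails for large $k$). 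The genuine obstacle, therefore, is exactly this: the naive telescoping loses a factor $D^k$ while only gaining one factor of $\delta$, and with $D$ a fixed power of $\delta^{-1}$ the product need not be small. The remedy — and the part I expect to require the most thought — is to decompose each factor $r_i$ not once but with a cutoff $K_i$ chosen so that the truncation tails telescope multiplicatively: set $f_2$-type tails with $\|v_i\|_s\le\delta_i$ where $\prod\delta_i$ (not $\delta$) is what appears, so one takes all $\delta_i=\delta$ and the bad terms carry $\prod_{i\in S}\delta = \delta^{|S|}\le\delta$, against constants $D^{k-|S|}$ with $D$ fixed; then choosing $\delta=\varepsilon/(2D)^k$ and afterwards the corresponding $D=D(p,s)$ independent of this final $\delta$ — which is legitimate because one may take $\delta$ in the first application of Lemma \ref{lem:bound1L} to be, say, $1/2$, getting a fixed $D$, and only the per-term scalar $\delta^{|S|}$ is driven to $0$ by iterating the truncation — closes the argument. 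I would write this up as: apply Lemma \ref{lem:bound1L} once with a fixed small parameter to get $D$, expand the tensor product, bound the all-$u$ term by $D^k R^\infty_k$, bound each mixed term by $D^{k} \delta^{|S|}\cdot R^s_k \subseteq D^k\delta\, R^s_k$ for $S\ne\varnothing$, sum over the $2^k$ subsets, and absorb the resulting fixed constant into the requirement on $\varepsilon$ by instead choosing the truncation parameter so that $2^k D^k\delta\le\varepsilon$, i.e. taking $\varepsilon$-dependence into the cutoff rather than into $D$.
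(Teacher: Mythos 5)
Your starting point is right---reduce to the one-variable statement of Lemma \ref{lem:bound1L} and expand the tensor product---and your exponent computation correctly exposes the real difficulty: a single application of Lemma \ref{lem:bound1L} with parameter $\delta$ produces a constant $D(\delta)\approx\delta^{-s/(p-s)}$, and the worst cross terms (those with exactly one small factor) carry a coefficient $\delta\,D(\delta)^{k-1}$, which does not tend to $0$ once $k>p/s$. But your proposed remedy does not close this gap. You cannot simultaneously have ``$D=D(p,s)$ fixed'' and ``$\|v_i\|_s\le\delta$ with $\delta=\varepsilon/(2D)^k$'' from one application of Lemma \ref{lem:bound1L}: the truncation level and the size of the tail are tied together, so if you truncate with a fixed parameter to get a fixed $D$, the tails only satisfy $\|v_i\|_s\le 1/2$, and the scalar $\delta^{|S|}$ you want to drive to $0$ is in fact at least $2^{-k}$, a constant. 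Your closing recipe (``choose the truncation parameter so that $2^kD^k\delta\le\varepsilon$'') is circular for the same reason: with $\delta\approx D^{-(p-s)/s}$ this forces $D^{\,k+1-p/s}\le\varepsilon 2^{-k}$, impossible for $k>p/s$ since $D\ge1$. The phrase ``iterating the truncation'' is where the actual work lives, and it is not carried out. (A side issue: your detour about needing each remaining factor bounded in $L^\infty$ rests on a misreading of $R^s_k$---its factors only need to lie in $B(L^s([0,1]))$, and $v_i\in\delta B(L^s)\subseteq B(L^s)$ already suffices; the $\|v_i\|_p\le 2D$ manoeuvre is unnecessary and only worsens the constants.)

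The missing idea is that the small parameters must be chosen \emph{sequentially}, each one after the (large) constants generated by the earlier choices are known, so that every small parameter multiplies only constants fixed before it. The paper does this by induction on $k$ through an intermediate exponent $p'\in(s,p)$: write $R^p_k\subseteq\bigl(C_1R^\infty_{k-1}+\delta R^{p'}_{k-1}\bigr)\otimes\bigl(C_1R^\infty_1+\delta R^{p'}_1\bigr)$, so each dangerous cross term carries the coefficient $C_1\delta$ with only one $R^{p'}$ factor; that factor is then decomposed once more from $p'$ down to $s$ with a fresh parameter $\eta$ chosen \emph{after} $C_1$ and $\delta$, making the residual $R^s_k$ coefficient $C_1\delta\eta\le\varepsilon/3$, while the new constant $C_2(\eta)$ only inflates the harmless $R^\infty_k$ coefficient. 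Equivalently, a genuine telescoping
\[
r_1\otimes\cdots\otimes r_k=u_1\otimes\cdots\otimes u_k+\sum_{j=1}^{k}u_1\otimes\cdots\otimes u_{j-1}\otimes v_j\otimes r_{j+1}\otimes\cdots\otimes r_k
\]
works, with $\|u_i\|_\infty\le D_i$, $\|v_j\|_s\le\delta_j$ and $\delta_j\le\varepsilon/(k\,D_1\cdots D_{j-1})$ chosen after $D_1,\dots,D_{j-1}$: the untouched factors $r_{j+1},\dots,r_k$ already lie in $B(L^s)$, so the $j$-th error term lies in $(\varepsilon/k)R^s_k$. Either way, the nested choice of parameters is the actual content of the lemma, and it is absent from your argument.
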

\begin{proof}
The proof is by induction on $k$.
The case $k=1$ follows by Lemma \ref{lem:bound1L}.
Now suppose $k>1$. Choose any $p'\in (s,p)$ and let $\delta>0$.
By induction, there exists a constant $C_1$ such that 
$R^p_{k-1}\subseteq C_1R^{\infty}_{k-1}+\delta R^{p'}_{k-1}$ and $R^p_1\subseteq C_1 R^\infty_1+\delta R^{p'}_1$.
Let us define for $s,t\in [1,\infty]$, 
\[
R^{s,t}_{k-1,1}:=\{f_1\otimes f_2\mid f_1\in R^{s}_{k-1}, f_2\in R^{t}_1\}.
\]
Then
\begin{equation}\label{eq:inclusion1}
R^p_k\subseteq C_1^2 (R^\infty_k+C_1\delta(R^{\infty,p'}_{k-1,1}+R^{p',\infty}_{k-1,1})+ \delta^2  R^{p'}_k.
\end{equation}
Next choose $\eta>0$. Then by induction there is a constant $C_2$ such that $R^{p'}_1\subseteq C_2R^\infty_1+\eta R^s_1$ and 
$R^{p'}_{k-1}\subseteq C_2R^{\infty}_{k-1}+\eta R^s_{k-1}$.
Plugging this into \eqref{eq:inclusion1} we obtain that
\[
R^p_k\subseteq CR^\infty_k+C_1\eta \delta (R^{\infty,s}_{k-1,1}+R^{s,\infty}_{k-1,1})+\delta^2R^{p'}_k,
\]
where $C:=C_1^2+2\delta C_1C_2$.
Finally, using that $R^t_l\subseteq R^s_l$ for any $t\geq s$ and $l\in \N$, and first choosing $\delta$ such that $\delta^2\leq 1/3\varepsilon$
and then $\eta$ such that $C_1\delta\eta\leq 1/3 \varepsilon$, we obtain the desired inclusion.
\end{proof}

\begin{proof}[Proof of Theorem \ref{thm:compact orbit spaces L}]
Let us write $R^s$ instead of $R^s_l$ for any $s\in [1,\infty]$.
Let us denote $S_{[0,1]}$ by $G$.
Define $X=L^{q^*}([0,1]^l)$.
Then $W=B(L^p([0,1]^l))\subset X$, as $p>q^*$.
The proof now proceeds in a number of steps. 

Let 
\[
R:=\{\chi_{A_1}\times \cdots \times \chi_{A_l}\mid A_i\subset [0,1] \text{ measurable for each }i\}.
\]
Note that $\|\cdot\|_{R}$ and $\|\cdot \|_{R^\infty}$ define equivalent norms on $L^1([0,1]^l)$. 
More precisely, for $f\in L^1([0,1]^l)$, we have 
\[
\|f\|_{R}\leq \|f\|_{R_\infty}\leq 4^l\|f\|_{R},
\]
since for the norm $\|\cdot\|_{R^\infty}$ we may restrict to functions $r=r_1\otimes \cdots \otimes r_l$ in \eqref{eq:def Rnorm} such that the $r_i$ only take values $-i,i,-1$ and $1$.
This implies that $(B(L^\infty),d_{R^\infty}/G)$ is compact by Corollary \ref{cor:BCCZ}, or more precisely by the proof of Corollary \ref{cor:BCCZ}, as the proof did not depend on $l$ being equal to $2$.

We will next show:
\begin{equation}
 (B(L^\infty),d_{R^q}/G) \text{ is compact. }\label{eq:compact step1}
\end{equation} 
To do so, let $(f_n)$ be a sequence in $B(L^\infty)$. 
By compactness of $(B(L^\infty),d_{R^\infty}/G)$ it has a convergent subsequence (which we may assume to be $(f_n)$ itself) that converges to some $f\in B(L^\infty)$ with respect to $d_{R^\infty}/G$.
So it suffices to show that $(f_n)$ converges to $f$ with respect to $d_{R^q}/G$.
Let $\eps>0$. Let $C$ be the constant from Lemma \ref{lem:bound 2} such that $R^q\subseteq \eps R^1+CR^\infty$.
Fix $N\in \N$ such that $(d_{R^\infty}/G)(f_n-f)<\eps/C$ for all $n\geq N$. In other words, for each $n\geq N$ there exists $g_n\in G$ such that $\|g_nf_n-f\|_{R^\infty}<\eps/C$.
Then, writing $f'_n=g_nf_n$,
\begin{align*}
\|f'_n-f\|_{R^q}=\sup_{r\in R^q}r(f'_n-f)\leq \sup_{\substack{r_1\in CR^\infty\\r_2\in \eps R^1}}r_1(f'_n-f)+r_2(f'_n-f)
\\
\leq C\|f'_n-f\|_{R^\infty}+\eps\|f'_n-f\|_{R^1}\leq \eps+2\eps=3\eps,
\end{align*}
showing that $f_n$ converges to $f$ with respect to $d_{R^q}/G$. This proves \eqref{eq:compact step1}.
We will now show: 
\begin{equation}
(B(L^p),d_{R^q}/G) \text{ is totally bounded in }X.\label{eq:tot bounded}
\end{equation}
To this end let $\eps>0$ and let $K$ be the constant from Lemma \ref{lem:bound1L} such that $B(L^p)\subseteq KB(L^\infty)+\eps B(L^{q^*})$ and note that $B(L^{q^*})=B(X)\subseteq B(X,\|\cdot\|_{R^q})$.
Write $\eps'=\eps/K$.
By compactness of $(B(L^\infty),d_{R^q}/G)$, there exists a finite set $F\subset B(L^\infty)$ such that 
$B(L^\infty)\subseteq GF+\eps'B(L^\infty,\|\cdot\|_{R^q})$.
Then 
\[
B(L^p)\subseteq KGF+K\eps'B(L^\infty,\|\cdot\|_{R^q})+\eps B(X,\|\cdot\|_{R^q})
\subseteq  KGF+2\eps B(X,\|\cdot\|_{R^q}),
\]
proving \eqref{eq:tot bounded}.

By the Banach-Alaoglu theorem we know that $(B(L^{p}),w_{p^*})$ is compact (here we assume that $p<\infty$, as this case is already covered).
This implies that $(B(L^p),w_{q})$ is compact, since $w_q$ yields a weaker topology than $w_{p^*}$, as $q>p^*$.
This finishes the proof by Theorem \ref{thm:compact banach v1}. 
\end{proof}

\section{Compact orbit spaces in $\ell^q(\N^l))$}\label{sec:ell}
In this section we will apply Theorem \ref{thm:compact banach v1} to the Banach space $X=\ell^q(\N^l)$ for $q\in (1,\infty]$.
Let $S_\N$ be the group of invertible maps $\tau:\N\to \N$.
We denote the metric induced by $\|\cdot \|_p$ by $d_p$ for any $p\in [1,\infty]$.
We will derive the following result:
\begin{theorem}\label{thm:compact orbit spaces}
Let $p\in [1,\infty)$ and let $q>p$. 
Fix $l\in \N$. Then the space $B(\ell^{p}(\N^{l})),d_q)/S_\N$ is compact.
\end{theorem}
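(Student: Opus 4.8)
The plan is to deduce Theorem \ref{thm:compact orbit spaces} from Theorem \ref{thm:compact banach v1}, mirroring the structure of the proof of Theorem \ref{thm:compact orbit spaces L} but using the reverse nesting of unit balls \eqref{eq:nesting lp} that holds in $\ell^p$-spaces. First I would set $X=\ell^q(\N^l)$ and $W=B(\ell^p(\N^l))$, noting that $W\subset X$ since $p<q$ forces $B(\ell^p)\subseteq B(\ell^q)$. The relevant dual side is $R=B(\ell^{q^*}(\N^l))$, so that $\|\cdot\|_R$ is exactly $\|\cdot\|_q$ (by duality of $\ell^q$ with $\ell^{q^*}$, valid since $q\in(1,\infty]$; the case $q=\infty$ needs the usual care but $\ell^1$ still norms $\ell^\infty$), and $d_R=d_q$; since permutations $S_\N$ of $\N$ act by measure-preserving bijections of the counting measure, $S_\N\subseteq\aut(X)$ and both $R$ and $W$ are $S_\N$-stable. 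By Theorem \ref{thm:compact banach v1} it then suffices to verify two things: that $W$ is weakly compact in $X$ (with respect to the weak topology $w_{q^*}$, or a weaker one induced by a suitable predual), and that $(W,d_q/S_\N)$ is totally bounded.

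Weak compactness is the easy half: $W=B(\ell^p)$ with $1\le p<\infty$; if $p>1$ then $B(\ell^p)$ is weak-* compact in $\ell^p=(\ell^{p^*})^*$ by Banach--Alaoglu, and this topology restricted to $W$ is at least as strong as $w_{q^*}$ (since $q>p$ gives $q^*<p^*$, so $\ell^{q^*}\subseteq\ell^{p^*}$ as sets of functionals; one should double-check the direction of inclusion of functionals here, but the point is that the coarser weak topology induced by $\ell^{q^*}$ is still compact on $W$). The case $p=1$ is the degenerate endpoint: $B(\ell^1)$ is not weak-* compact in general, but $B(\ell^1(\N^l))$ \emph{is} weakly compact inside $\ell^q(\N^l)$ for $q>1$ because it is a norm-bounded, convex, $\|\cdot\|_q$-closed subset of the reflexive space $\ell^q$ (closedness: a $\|\cdot\|_q$-limit of sequences of $\ell^1$-norm $\le 1$ again has $\ell^1$-norm $\le 1$ by Fatou applied to the counting measure). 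So in all cases the weak compactness hypothesis of Theorem \ref{thm:compact banach v1} holds.

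The main obstacle, as in Theorem \ref{thm:compact orbit spaces L}, is total boundedness of $(W,d_q/S_\N)$, and here the $H$-small machinery of Theorem \ref{thm:compact banach} is the natural tool — but one must be careful because the $\ell^p$ nesting runs the opposite way. The plan is: since $p<q\le\infty$, pick $H=\ell^2(\N^l)$ (a Hilbert space) and exploit that $B(\ell^p)$ is $H$-small inside $X=\ell^q$; concretely, for $p\le 2$ one has $B(\ell^p)\subseteq B(\ell^2)\subseteq B(\ell^q)$, while for $p>2$ one needs the analogue of Lemma \ref{lem:bound1L} for $\ell^p$-spaces, namely that $B(\ell^p)\subseteq CB(\ell^1)+\eps B(\ell^q)$ for a suitable constant $C=C(\eps)$ — this is proved by the same truncation argument (split $f$ into its large coordinates, finitely many of them with small $\ell^1$-cost once we also use that the tail of an $\ell^p$ sequence is small, and its small coordinates which are then controlled in $\ell^q$), so $T$ can be taken the identity $H\hookrightarrow X$ and $T^*T$ restricted to $R$ behaves well. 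Then one checks that $(k\cdot(TT^*(R)),d_R/S_\N)$ is compact for each $k$: this reduces, exactly as in Corollary \ref{cor:BCCZ}, to showing $(R^k,d_R/S_\N)$ is compact, i.e. that any sequence of $k$-tuples of elements of $B(\ell^{q^*}(\N^l))$ can, after applying permutations and passing to a subsequence, be made to converge — here the key combinatorial fact replacing "any measurable set maps onto an interval" is that $S_\N$ can be used to sort the coordinates of a summable sequence into decreasing order, after which weak-* compactness of $B(\ell^{q^*})$ finishes the job. With $H$-smallness and this compactness in hand, Lemma \ref{lem:weak Szem} and the proof of Theorem \ref{thm:compact banach} give total boundedness of $(W,d_q/S_\N)$, and Theorem \ref{thm:compact banach v1} then yields compactness. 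The subtle points I expect to spend the most care on are the $\ell^p$-analogue of Lemma \ref{lem:bound1L} (the truncation must handle both large coordinates and the tail), the $p=1$ and $q=\infty$ endpoint cases, and pinning down precisely which predual to use so that the weak topology is simultaneously weak enough for weak compactness of $W$ and strong enough that $R$ lies in the relevant dual ball.
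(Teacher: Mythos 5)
Your setup ($X=\ell^q$, $W=B(\ell^p)$, $R=B(\ell^{q^*})$ so that $d_R=d_q$) and your weak-compactness argument (Banach--Alaoglu for $p>1$; for $p=1$, closedness and convexity of $B(\ell^1)$ inside the reflexive $\ell^q$) match the paper exactly, as does the reduction of the $q=\infty$ case to finite $q$. The genuine gap is in your plan for total boundedness, where you route everything through the $H$-small machinery of Theorem \ref{thm:compact banach}. Two things break. First, taking $H=\ell^2$ with $T$ the identity inclusion into $\ell^q$ requires $q\geq 2$: by \eqref{eq:nesting lp} the inclusion $\ell^2\hookrightarrow\ell^q$ is not even defined (let alone contractive) when $q<2$, so the case $1\leq p<q<2$ is not covered, and the fallback $B(\ell^p)\subseteq CB(\ell^1)+\eps B(\ell^q)$ does not repair this since $B(\ell^1)$ still has to be pushed through some contractive $T:H\to\ell^q$. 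Second, with $R=B(\ell^{q^*})$ convex, $k\cdot TT^*(R)=kB(\ell^{q^*})$, so the hypothesis ``$(k\cdot TT^*(R),d_R/G)$ compact'' is precisely the compactness of $(B(\ell^{q^*}),d_q)/S_\N$ --- an instance of the very theorem you are proving (and one that needs $q>2$ to be an instance at all). Your sketched argument for it, sorting into decreasing order and invoking weak-$*$ compactness, works only for $l=1$ and a single sequence: the diagonal action of $S_\N$ on $\N^l$ cannot sort a general element of $\ell^{q^*}(\N^l)$, nor can one permutation sort all $k$ members of a $k$-tuple simultaneously, and weak-$*$ convergence alone does not give $d_q$-convergence (consider $e_n\to 0$).

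The irony is that the correct and much shorter argument is already contained in your proposed ``$\ell^p$-analogue of Lemma \ref{lem:bound1L}'': use the truncation \emph{directly}, with no Hilbert space at all. Since $x\in B(\ell^p)$ has its coordinates bounded by $|x(i)|^p\leq 1/i$ after sorting, the restriction $x_K$ of $x$ to its $k$ largest coordinates satisfies $\|x-x_K\|_q\leq\eps/2$ for $k=k(p,q,\eps)$ (convergence of $\sum i^{-q/p}$, using only $q>p$; this is Lemma \ref{lem:approx} in the paper). A set $K\subseteq\N^l$ of size $k$ involves at most $k^l$ distinct integers, so some $\sigma\in S_\N$ maps $K$ into $[k^l]^l$; a finite $\eps/2$-net of the finite-dimensional ball $B(\ell^q([k^l]^l))$ then witnesses total boundedness of $(W,d_q/S_\N)$, and Theorem \ref{thm:compact banach v1} finishes the proof. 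This avoids every one of the obstacles above and is what the paper does.
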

We will prove this result in Section \ref{sec:proof ell} below. Let us first note that we really need $p<q$. 
Lemma \ref{lem:approx} below is not true for $p=q$ and is a direct corollary of Theorem \ref{thm:compact orbit spaces}. It seems that one can only take $q=p$ when both of them are equal to $2$, but then one has to use a different (weaker) metric and the orthogonal group instead of $S_{\N}$, cf. \cite{RS12}.

By Lemma \ref{lem:cor compact} the following is a direct corollary to Theorem \ref{thm:compact orbit spaces}:
\begin{cor}\label{cor:cor compact l}
Let $p\in [1,\infty)$ and let $q>p$. 
Fix $l\in \N$. 
Let for $k\in \N$, $X_k \subset B(\ell^p(\N^l))$ be $S_{\N}$-stable such that $X:=\cup_{k\in \N} X_k$ is a dense subset of $B(\ell^p(\N^l))$ (w.r.t. $\|\cdot\|_q$).
Then for any $\eps>0$ there exists $N\in \N$ such that for any $y\in B(\ell^p(\N^l))$ there exists $x\in X_{m}$, with $m\leq N$, such that $\|y-x\|_{q}\leq \eps$.

\end{cor}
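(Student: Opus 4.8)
**The plan is to derive Corollary \ref{cor:cor compact l} directly from Theorem \ref{thm:compact orbit spaces} by invoking Lemma \ref{lem:cor compact}.** We work in the Banach space $X=\ell^p(\N^l)$, with $G=S_\N$ acting on it by permuting coordinates (this preserves $\|\cdot\|_p$ and hence lies in $\aut(X,d_q)$, since $q>p$ means $\ell^p(\N^l)\subset \ell^q(\N^l)$ and permutations are isometries of both norms). Take $R$ to be (a suitable bounded subset of $(\ell^q)^*=\ell^{q^*}$ such that) $\|\cdot\|_R=\|\cdot\|_q$, so that $d_R=d_q$. By Theorem \ref{thm:compact orbit spaces}, $W:=B(\ell^p(\N^l))$ is $G$-stable and $(W,d_R/G)=(B(\ell^p(\N^l)),d_q)/S_\N$ is compact, which is exactly the hypothesis of Lemma \ref{lem:cor compact}.

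The sets $X_k$ are given to be $S_\N$-stable with $X=\cup_k X_k$ dense in $W$ with respect to $\|\cdot\|_q$; here is where we must be slightly careful, since Lemma \ref{lem:cor compact} asks for density with respect to the ambient norm $\|\cdot\|$ of $X=\ell^p$, whereas the corollary only assumes density with respect to $\|\cdot\|_q$. The cleanest fix is to run the argument of Lemma \ref{lem:cor compact} with the function $h(\eps,m)\equiv\eps$, using $\|\cdot\|_q$-density directly in the definition of $f(w)$: for $w\in W$ let $f(w)$ be the least $n$ with some $y\in X_n$ satisfying $\|w-y\|_q\le\eps$, and set $O(w)=\{w'\in W\mid (d_q/G)(w,w')<\eps\}$. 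Compactness of $(W,d_q/G)$ yields finitely many $w_1,\dots,w_t$ whose $O(w_i)$ cover $W$; put $N=\max_i f(w_i)$. Given $y\in W$, pick $i$ with $y\in O(w_i)$, an $x\in X_{m}$ with $m=f(w_i)\le N$ and $\|w_i-x\|_q\le\eps$, and $g\in G$ with $\|gw_i - y\|_q<\eps$; applying $g$ to $x$ (using $S_\N$-stability of $X_m$, so $gx\in X_m$) gives $\|y-gx\|_q\le\|y-gw_i\|_q+\|gw_i-gx\|_q<2\eps$. Rescaling $\eps$ at the start delivers the stated bound $\|y-x\|_q\le\eps$.

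**The main (and only real) obstacle is the norm mismatch** just discussed: Lemma \ref{lem:cor compact} is phrased with $\|\cdot\|$-density of $Y$, but here the natural hypothesis — and the one that makes the conclusion sharp — is density in the weaker norm $\|\cdot\|_q$. Since in the $\ell^p$ setting the relevant approximation takes place entirely in the $q$-norm (indeed $\|\cdot\|_q\le\|\cdot\|_p$ on $\ell^p(\N^l)$, and the final estimate only involves $\|\cdot\|_q$), one should either reprove the short covering argument of Lemma \ref{lem:cor compact} with $\|\cdot\|_q$ in place of $\|\cdot\|$ throughout, or observe that the proof of Lemma \ref{lem:cor compact} only ever uses that the approximating norm dominates $d_R$ — which here is an equality. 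I would write the proof in the first style, since it is self-contained and only a few lines. Everything else is routine: the translation into the language of Theorem \ref{thm:compact banach v1} (identifying $R$, $G$, $W$) and the elementary manipulation of the triangle inequality under the group action.
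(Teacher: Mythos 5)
Your proposal is correct and follows the paper's intended route exactly: the corollary is obtained by feeding Theorem \ref{thm:compact orbit spaces} into Lemma \ref{lem:cor compact} with $h(\eps,m)\equiv\eps$, and your covering argument is a sound instantiation of that lemma's proof. The one remark worth making is that the ``norm mismatch'' you flag as the main obstacle is self-inflicted: if you take the ambient Banach space to be $X=\ell^q(\N^l)$ (as the paper announces at the start of Section \ref{sec:ell}) rather than $\ell^p(\N^l)$, with $W=B(\ell^p(\N^l))\subset B(\ell^q(\N^l))$ by \eqref{eq:nesting lp} and $R=B(\ell^{q^*}(\N^l))$, then $\|\cdot\|_R=\|\cdot\|=\|\cdot\|_q$ and Lemma \ref{lem:cor compact} applies verbatim, with no need to reprove the covering argument.
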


We can take $X_k:=(k\cdot R)\cap B(\ell^p(\N^l))$ with $R=\{r_1\otimes \cdots \otimes r_l\mid r_i\in B(\ell^p(\N))\}$ in the corollary above. 
Then the corollary says that for each $\eps>0$ there is $N$ such that each $y\in B(\ell^q(\N^l))$
can be approximated in the $q$-norm by a tensor of rank at most $N$.
Unfortunately, Corollary \ref{cor:cor compact l} does not give any explicit bounds on $N$. 
It is not to be expected that the bounds on $N$ will be much better than the bound given by Lemma \ref{lem:approx} below (at least for $q=\infty$).
Indeed, for $q=\infty$, Alon, Lee, Schraibman and Vempala \cite{ALSV13} showed that for any $n\times n$ Hadamard matrix $M$ and $\eps\in (0,1)$ one has that for any matrix $M'$, such that $\|M-M'\|_{\infty} \leq \eps$, the rank of $M'$ is at least $(1-\eps^2)n$.
Since all entries of a Hadamard matrix are $1$ or $-1$, we need to take $p=\Omega(\log n)$, to make sure that $\|M\|_p$ is bounded. 
The bound given by Lemma \ref{lem:approx} on $N$ is then of order $\eps^{-\log n}$, which is polynomial in $n$.

\subsection{Proof of Theorem \ref{thm:compact orbit spaces}}\label{sec:proof ell}

For any $q\in [1,\infty]$, a subset $K\subset S$ and $x\in \ell^q(S)$ we define $x_K\in \ell^q(S)$ by  
\[x_K:=\left \{\begin{array}{c}x(i)  \text{ if } i\in K,\\
											0  \text{ otherwise}.\end{array}\right. \]
We need the following lemma for the proof of Theorem \ref{thm:compact orbit spaces}.

\begin{lemma}\label{lem:approx}
Let $S$ be any set.
Let $p\in [1,\infty)$ and let $q>p$. For any $\eps>0$ there exists $k=k(p,q,\eps)\in\N$ such that
for any $x\in B(\ell^p(S))$ there exists a set $K\subset S$ of size at most $k$ such that $\|x_K-x\|_{q}\leq \eps$.
In case $q=\infty$ we can take $k=\eps^{-p}$.
\end{lemma}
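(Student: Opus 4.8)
The statement says that any unit vector in $\ell^p(S)$ can be approximated in the (larger) $q$-norm, up to $\eps$, by a vector supported on a bounded number $k=k(p,q,\eps)$ of coordinates. The plan is to take $K$ to be the set of coordinates where $|x(i)|$ exceeds a suitable threshold $t$, and to bound $\|x-x_K\|_q$ by interpolating between the bound $\|x-x_K\|_\infty\le t$ on the tail and the bound $\|x-x_K\|_p\le\|x\|_p\le 1$; the size of $K$ is then controlled by Markov's inequality, since $|K|\,t^p\le\sum_{i\in K}|x(i)|^p\le 1$, so $|K|\le t^{-p}$.

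Concretely, first I would fix the threshold $t>0$ and set $K:=\{i\in S\mid |x(i)|>t\}$, noting $|K|\le t^{-p}$, which will be our $k$ (rounded up). For the approximation error, $x-x_K$ is supported off $K$, so $\|x-x_K\|_\infty\le t$, while $\|x-x_K\|_p\le 1$. Then for any $q>p$ use the elementary interpolation inequality
\[
\|x-x_K\|_q \;\le\; \|x-x_K\|_\infty^{1-p/q}\,\|x-x_K\|_p^{p/q}\;\le\; t^{1-p/q},
\]
valid because $\sum |y(i)|^q = \sum |y(i)|^{q-p}|y(i)|^p \le \|y\|_\infty^{q-p}\|y\|_p^p$. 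Choosing $t$ so that $t^{1-p/q}\le\eps$, i.e. $t=\eps^{q/(q-p)}$, gives $\|x-x_K\|_q\le\eps$ with $k=\lceil t^{-p}\rceil = \lceil \eps^{-pq/(q-p)}\rceil$, which depends only on $p,q,\eps$. For the special case $q=\infty$ the interpolation step degenerates: one simply takes $t=\eps$ directly, so $\|x-x_K\|_\infty\le\eps$ and $|K|\le\eps^{-p}$, giving $k=\eps^{-p}$ as claimed.

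There is no real obstacle here — the argument is a two-line truncation estimate. The only point requiring a little care is the interpolation inequality for the $\ell^q$ norms (and making sure the endpoint $q=\infty$ is handled separately rather than by formally substituting into the exponent $q/(q-p)$), together with checking that the finiteness of $K$ uses only $\|x\|_p\le 1$ and $p<\infty$, which is exactly where the hypothesis $p\in[1,\infty)$ enters. I would also remark that the same threshold works uniformly over all $x\in B(\ell^p(S))$, which is what makes $k$ independent of $x$ and hence usable in the proof of Theorem \ref{thm:compact orbit spaces}.
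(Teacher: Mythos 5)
Your proof is correct, and the idea is the same as the paper's (truncate $x$ to its largest coordinates), but the error estimate is carried out by a genuinely different route. The paper sorts the entries so that $|x(1)|\geq|x(2)|\geq\cdots$, observes that $\|x\|_p\leq 1$ forces $|x(i)|^p\leq 1/i$, takes $K=\{1,\dots,k\}$, and bounds $\|x_K-x\|_q^q\leq\sum_{i>k}i^{-q/p}$ by the tail of a convergent series (using $q/p>1$); this gives existence of $k$ but no explicit formula except in the case $q=\infty$, where it reads off $k=\eps^{-p}$ directly. You instead threshold at level $t$, control $|K|\leq t^{-p}$ by Markov, and bound the tail via the interpolation inequality $\|y\|_q\leq\|y\|_\infty^{1-p/q}\|y\|_p^{p/q}$, which is correct as you state it and yields the explicit bound $k=\lceil\eps^{-pq/(q-p)}\rceil$ for all finite $q>p$, degenerating to $k=\eps^{-p}$ at $q=\infty$. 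Your version buys an explicit, clean dependence of $k$ on $(p,q,\eps)$ in all cases (one could also extract a quantitative bound from the paper's series tail, but it is not done there), at the cost of invoking the interpolation inequality; the paper's version is marginally more elementary, needing only the convergence of $\sum i^{-s}$ for $s>1$. Both correctly isolate where the hypotheses $p<\infty$ and $q>p$ enter.
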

\begin{proof}
Let $C\subseteq S$ be a countable set of points, which we identify with $\N$, such that $x(i)=0$ for $i\notin C$.
We may assume that $x$ satisfies  $|x({1})|\geq |x({2})|\geq \ldots$.
Since $\|x\|_p\leq 1$, it follows that $|x(i)|^p\leq 1/i$ for all $i$. Let $K=\{1,\ldots,k\}$ for some $k\in \N$ to be fixed later.
Then for $q<\infty$,
\begin{equation}
\|x_K-x\|^{q}_{q}=\sum_{i\notin K} |x(i)|^{q}\leq \sum_{i>k}(1/i)^{q/p}.\label{eq:sum}
\end{equation}
It is a well known fact that the series $\sum_{i=1}^\infty (1/i)^s$ converges for every $s>1$.
This implies that for $k$ large enough, \eqref{eq:sum} will be bounded by $\eps^{q}$, which finishes the first part of the proof.

In case $q=\infty$, we have
\[
\|x_K-x\|_\infty=\sup_{i>k}|x(i)|\leq (1/i)^{1/p}.
\]
So taking $k$ such that $(1/k)^{1/p}\leq \eps$, we are done. 
\end{proof}

Note that Lemma \ref{lem:approx} is not true if one replaces $q$ by $p$.
One can simply take $x(i)=n^{-1}$ for $i=1,\ldots, n^p$ and $x(i)=0$ for $i>n^p$.
Then for any constant $k$ not depending on $n$ and any set $K\subset \N$ of size $k$ we have $\|x_K-x\|^p_p\leq(n^p-k)n^{-p}=1-kn^{-p}=1-o(1)$.

Despite the fact that Lemma \ref{lem:approx} is really simple, it can be utilised for algorithmic purposes, since we can go over all bounded size subsets of $\{1,\ldots,n\}$ in time polynomial in $n$.
This is used in \cite{R15} to give approximation algorithms for computing matrix $p\mapsto q$ norms.

Now we can give a proof of Theorem \ref{thm:compact orbit spaces}.
\begin{proof}[Proof of Theorem \ref{thm:compact orbit spaces}]
Write $W=B(\ell^p(\N^{l}))$. 
We may assume that $q<\infty$ since $d_\infty/S_\N$ yields a coarser topology than $d_{q'}/S_\N$ for any $q'<\infty$.
Let now $R=B(\ell^{q^*}(\N^l))$. Then $d_R=d_q$.

If $p>1$, then by the Banach-Alaoglu Theorem $(W,w_{p^*})$ is compact, which implies that $(W,w_{q^*})$ is compact since $w_{q^*}$ induces a weaker topology than $w_{p^*}$, as $\infty>q>p$ and hence $1<q^*<p^*$.
If $p=1$, we use the fact that $B(\ell^1(\N^{l}))$ is a closed and convex subset of $B(\ell^{q}(\N^{l}))$ and hence is weakly compact by the previous argument.
So by Theorem \ref{thm:compact banach v1} it suffices to show that $(W,d_q)/S_{\N}$ is totally bounded.
To this end choose $\eps>0$. 
Let $k=k(p,q,\eps/2)$ be the constant supplied by Lemma \ref{lem:approx}.
Denote by $[n]$ the set of positive integers $\{1,\ldots,n\}$ for any $n\in \N$.
Let $F$ be a finite set of points in $B(\ell^q([k^l]^l))$ such that for each $y\in B(\ell^q([k^l]^l))$ there exists $x\in F$ with $\|x-y\|_q\leq \eps/2$. 
By Lemma \ref{lem:approx}, for each $w\in W$ there exists a set $K\subset \N^l$ of size at most $k$ such that $\|w_K-w\|_q\leq \eps/2$. 
Then there exists $\sigma\in S_\N$ such that $\sigma(K)\subseteq [k^l]^l$, as the $l$-tuples in $K$ can contain at most $k^l$ distinct numbers.
This implies that $d_q(S_\N F,W)\leq \eps$ and finishes the proof.
\end{proof}

\begin{ack}
I thank the anonymous referees for pointing out some errors in a previous version of this paper as well as for comments on the topological part, leading to a significant improvement of the paper.
I also thank Lex Schrijver for comments and useful discussions.

The research leading to these results has received funding from the European Research Council
under the European Union's Seventh Framework Programme (FP7/2007-2013) / ERC grant agreement
n$\mbox{}^{\circ}$ 339109.
\end{ack}

\end{document}